\DeclareMathAlphabet{\pazocal}{OMS}{zplm}{m}{n}
\theoremstyle{plain}  % default
\newtheorem{theorem}{Theorem}[section]
\newtheorem*{theorem*}{Theorem}
\newtheorem{corollary}[theorem]{Corollary}
\newtheorem{proposition}[theorem]{Proposition}
\theoremstyle{definition}
\newtheorem{definition}[theorem]{Definition}
\newtheorem{remark}[theorem]{Remark}
\newtheorem*{claim*}{Claim}
\numberwithin{equation}{section}
\newcommand{\R}{\mathbb{R}}
\newcommand{\g}{\mathfrak{g}}
\newcommand{\h}{\mathfrak{h}}
\newcommand{\n}{\mathfrak{n}}
\newcommand{\Lb}{\pazocal{L}}
\newcommand{\LL}{\mathfrak{L}}
\newcommand{\I}{\mathfrak{I}}
\newcommand{\NN}{\mathfrak{N}}
\newcommand{\heis}{\mathfrak{heis}}
\newcommand{\iso}{\mathrm{iso}}
\newcommand{\ad}{\mathrm{ad}}
\newcommand{\Ad}{\mathrm{Ad}}
\newcommand{\SO}{\mathrm{SO}}
\newcommand{\Aut}{\mathrm{Aut}}
\newcommand{\spann}{\mathrm{span}}
\newcommand{\be}{\begin{equation}}
	\newcommand{\ee}{\end{equation}}
\newcommand{\ben}{\begin{enumerate}}
	\newcommand{\een}{\end{enumerate}}
\newcommand{\bit}{\begin{itemize}}
	\newcommand{\eit}{\end{itemize}}
\newcommand{\edoc}{\end{document}}
\def\br#1\er{{#1}} 
\def\bw#1\ew{\textcolor{brown}{#1}} 
\def\bb#1\eb{\textcolor{blue}{#1}} 
\def\br#1\er{\textcolor{red}{#1}} 
\def\bm#1\em{\textcolor{magenta}{#1}}
\def\bv#1\ev{\textcolor{olive}{#1}}
\renewcommand{\tocsection}[3]{%
	\indentlabel{\@ifnotempty{#2}{\ignorespaces#1 #2\quad}}#3}
\renewcommand{\tocsubsection}[3]{%
	\indentlabel{\@ifnotempty{#2}{\ignorespaces#1 #2\quad}}#3}
\newcommand\@dotsep{4.5}
\def\@tocline#1#2#3#4#5#6#7{\relax
	\ifnum #1>\c@tocdepth % then omit
	\else
	\par \addpenalty\@secpenalty\addvspace{#2}%
	\begingroup \hyphenpenalty\@M
	\@ifempty{#4}{%
		\@tempdima\csname r@tocindent\number#1\endcsname\relax
	}{%
		\@tempdima#4\relax
	}%
	\parindent\z@ \leftskip#3\relax \advance\leftskip\@tempdima\relax
	\rightskip\@pnumwidth plus1em \parfillskip-\@pnumwidth
	#5\leavevmode\hskip-\@tempdima{#6}\nobreak
	\leaders\hbox{$\m@th\mkern \@dotsep mu\hbox{.}\mkern \@dotsep mu$}\hfill
	\nobreak
	\hbox to\@pnumwidth{\@tocpagenum{\ifnum#1=1\fi#7}}\par% <-- \bfseries for \section page
	\nobreak
	\endgroup
	\fi}
\renewcommand\csname r@tocindent0\endcsname{0pt}
\def\l@subsection{\@tocline{2}{0pt}{2.5pc}{5pc}{}}
\begin{document}
%%%%%%%%%%%%%%%%%%%%%%%%%%%%%%%%%%%%%%%%%%%%%%%%%%%%%%%%%%%
\title{Isometries of 3-dimensional semi-Riemannian Lie groups}

%%%%%%%%%%%%%%%%%%%%%%%%%%%%%%%%%%%%%%%%%%%%%%%%%%%%%%%%%%%

\author[S. Chaib]{Salah Chaib}
\address{\hspace{-5mm} Salah Chaib, Centro de Matem\'{a}tica,
	Universidade do Minho,
	Campus de Gualtar,
	4710-057 Braga,
	Portugal} 
\email {salah.chaib@cmat.uminho.pt}
\author[A.C. Ferreira]{Ana Cristina Ferreira}
\address{\hspace{-5mm} Ana Cristina Ferreira, Centro de Matem\'{a}tica,
	Universidade do Minho,
	Campus de Gualtar,
	4710-057 Braga,
	Portugal} 
\email {anaferreira@math.uminho.pt}

\author[A. Zeghib]{Abdelghani Zeghib}
\address{\hspace{-5mm} Abdelghani Zeghib, UMPA, CNRS, \'Ecole Normale Sup\'erieure de Lyon, 46, All\'ee d'Italie 69364 Lyon Cedex 07, France }
\email{abdelghani.zeghib@ens-lyon.fr}

\subjclass[2020]{Primary 53C30; Secondary 53C50, 53C20, 57S15, 57S20.}

%\keywords{Isometries, Killing Lie algebra, left-invariant metrics, isotropy representation.}

\thanks{Corresponding author: Ana Cristina Ferreira (anaferreira@math.uminho.pt)}

\vspace*{-3mm}

\begin{abstract} Let $G$ be a connected, simply connected three-dimensional Lie group (unimodular or non-unimodular) equipped with a left-invariant (Riemannian or Lorentzian) metric $g$. By definition, the isometry group $\mathrm{Isom}(G, g)$ contains $G$ itself, acting by left translations. It turns out that, generically, $\mathrm{Isom}(G, g)$ is actually equal to $G$, and the natural question then becomes to classify those special metrics for which this is not the case. Using Lie-theoretical methods, we present a unified approach to obtain all pairs $(G, g)$ whose full isometry group $\mathrm{Isom}(G, g)$ has dimension greater than or equal to four. As a consequence, we determine, for every pair $(G, g)$, up to automorphism and scaling, the dimension of $\mathrm{Isom}(G, g)$, which can be three, four, or six.

	\medskip
	
	\noindent {\it Keywords}: Isometries, Killing Lie algebra, left-invariant metrics, isotropy representation.
	
\end{abstract}

\maketitle

\vspace*{-5mm}

{\small
\tableofcontents
}
%%%%%%%%%%%%%%%%%%%%%%%%%%%%%%%%%%%%%%%%%%%%%%%%%%%%%%%%%%%

\vspace*{-5mm}

\section{Introduction}

Symmetries are a central topic of study in all of mathematics. In semi-Riemannian geometry, symmetries are manifested by isometries. Given two semi-Riemannian manifolds $(M,g_M)$ and $(N, g_N)$, an isometry is a diffeomorphism $\Phi: M \longrightarrow N$ such that $\Phi^\ast g_N = g_M$, that is, the pullback of $g_N$ via $\Phi$ is $g_M$. Much information on this topic can be found, for instance, in the classical textbook of B. O'Neill  \cite[Ch. 8 and 9]{ONeill}.

The isometry group of a semi-Riemannian manifold has the structure of a Lie group in a natural way (\cite[Ch. 9, Th. 32]{ONeill}). For compact Riemannian manifolds, it is well-known that the isometry group is compact \cite{MyersSteenrod}. For semi-Riemannian manifolds this is not always the case, however, proved to be true for Lorentzian simply connected, compact (real analytic) manifolds,  see \cite{D'Ambra} and examples therein. In a recent publication \cite{ZhuChenLiang}, it was shown that the isometry group of a left-invariant semi-Riemannian metric of a connected, simply connected, compact simple Lie group is compact.

Lie groups equipped with a left-invariant metric, here called metric Lie groups, are of significant interest in the literature. One of their features is that it is immediate from the definitions that left translations are isometries.  It might however happen that there exist ``extra isometries'', that is isometries which are not left translations. The typical example is that of  the affine group of the line,  $\mathrm{Aff}^+(\R)$,  the group of $a x + b$-transformations  ($a>0$). Any left-invariant Riemannian metric on {$\mathrm{Aff}^+(\R)$} is isometric, up to scaling, to that of the hyperbolic plane. Therefore, it has extra isometries since its full isometry group is 3-dimensional and is precisely $\mathrm{PSL}(2, \R)$.

In general, the dimension of the full isometry group is greater or equal to $n=\dim G$ and is necessarily less or equal to the maximally symmetric case which is $n(n+1)/2$. This case is realized by space forms which are connected, simply connected semi-Riemannian manifolds which have constant sectional curvature and are geodesically complete \cite[Ch. 8, Def. 20]{ONeill}.

In particular, for a 3-dimensional metric Lie group, the dimension of the full isometry group is between 3 and 6. It is known that this dimension cannot be 5 (see, for instance, \cite{AlloutBelkacemZeghib}), so the only possibilities are 3, 4, or 6. 

Many articles have been devoted to the study of isometries of left-invariant metrics on Lie groups of dimension 3. In the seminal article of Milnor, \cite{Milnor}, the list of 3-dimensional unimodular Lie algebras was derived, and curvature properties, such as constant sectional curvature, of Riemannian metrics were studied.   In \cite{Shin}, a characterization, which turns out to be independent of curvature properties, of the isometry group of left-invariant Riemannian metrics on each of the six unimodular connected simply connected 3-dimensional Lie groups is given. The same question is considered in \cite{HaLee12}, where a different approach is used relying on normal forms of metrics and principal Ricci curvatures. Only recently, as far as the authors know, there were developments in the Lorentzian or in the non-unimodular Lie group cases. More precisely, a similar approach to that of \cite{HaLee12} was employed in \cite{BoucettaChakkar} to determine the isometry groups of all 3-dimensional, connected, simply connected and unimodular Lie groups endowed with a left-invariant Lorentzian metric. On the other hand, in  \cite{CosgayaReggiani}, the full isometry group of any Riemannian left-invariant metric on a simply connected,
non-unimodular Lie group of dimension 3 was computed. 

The case of left-invariant Lorentzian metrics on non-unimodular (connected, simply connected) Lie groups remained, therefore, unstudied. We remark that there are many more non-unimodular Lie algebras than unimodular ones and, moreover, they come in one-parameter families. Recall that the Lie algebra of the isometry group of a semi-Riemannian manifold is the Lie algebra of complete Killing vector fields. In the Riemannian case, all left-invariant metrics are complete and so the completeness of all Killing vector fields is guaranteed. The situation is quite different in the Lorentzian setting, since geodesic completeness often fails. For instance, incomplete metrics of constant sectional curvature cannot have isometry group of maximal dimension and, therefore, incomplete Killing fields exist. In particular, methods like the version of Singer's theorem, \cite{Singer}, which appears in \cite[Th. 2.1]{CosgayaReggiani} cannot be used. 

This paper is part of a program that aims at completing the picture for 3-dimensional Lie groups in terms of isometries and geodesic completeness. Here, we present an ``all at once'' approach based on Lie theoretical methods that determines, up to automorphism of the Lie algebra and scaling of the metric, all left-invariant Riemannian and Lorentzian metrics on all connected, simply connected Lie groups whose isometry group has dimension greater or equal to four.

\subsection{Bianchi classification}

The classification of 3-dimensional Lie algebras goes back to Bianchi in the late 19th century, \cite{Bianchi}. In Table \ref{Table:LA}, we present the list of all 3-dimensional Lie algebras (up to isomorphism) together with their preferred brackets as well as their automorphism groups.

For each connected and simply connected Lie group $G$, the left-invariant metrics will be given at the Lie algebra level $\g$ using the basis of 1-forms dual to the corresponding preferred basis of Table \ref{Table:LA}. Furthermore, the metrics will be given in normal form, that is, we will present a representative of a given metric in their orbit under the action of $\R^*\times\mathrm{Aut}(\g)$. Observe that by identifying the metrics whose isometry Lie algebra is of dimension $6$, Table \ref{Table:space-forms}, and of dimension $4$, Table \ref{Table:Isot-1dim}, we are automatically  identifying the metrics whose isometry Lie algebra is of dimension 3, as they are the remaining ones. The list of normal forms of metrics for each Lie algebra can be found in Appendix \ref{Ap:normal-forms}.

{\small
\begin{table}[H]
\begin{tabular}{*3l} \toprule  
 Lie algebra &  Non-vanishing brackets & Automorphism group  \\ \toprule
 $\R^3$ &  -- & $\mathrm{GL}(3, \R)$\\ \midrule
 $\mathfrak{so}(3)$ &  $[e_1,e_2]=e_3, [e_2,e_3]=e_1, [e_3,e_1] =e_2$\hspace*{5mm} & $\mathrm{SO}(3)$ \\ \midrule
 $\mathfrak{sl}(2, \R)$ &  $[e_1,e_2]=e_2, [e_3,e_1]=e_3, [e_2,e_3] =e_1$ & $\mathrm{SO}(2,1)$ \\ \midrule
$\mathfrak{heis}$ &  $[e_1, e_2] = e_3$ & $\left\{ \left( \begin{smallmatrix}
     a & b & 0 \\
     c & d & 0 \\
     e & f & ad-bc
 \end{smallmatrix}\right): {\tiny ad-bc\neq 0}\right\}$\\ \midrule
$ \mathfrak{euc}(2)$ &  $[e_1,e_2]=e_3, [e_3,e_1]=e_2$ & $\left\{ \left( \begin{smallmatrix}
     1 & 0 & 0 \\
     a & b & -c \\
     d & c & b
 \end{smallmatrix}\right), \left( \begin{smallmatrix}
     -1 & 0 & 0 \\
     a & -b & c \\
     d & c & b
 \end{smallmatrix}\right): {\tiny b^2+c^2\neq 0}\right\}$\\ \midrule
 $\mathfrak{sol}$ & $[e_1, e_2]=e_2, [e_3, e_1]=e_3$ & $\left\{ \left( \begin{smallmatrix}
     1 & 0 & 0 \\
     a & b & 0 \\
     c & 0 & d
 \end{smallmatrix}\right), \left( \begin{smallmatrix}
     -1 & 0 & 0 \\
     a & 0 & b \\
     c & d & 0
 \end{smallmatrix}\right): {\tiny bd\neq 0}\right\}$\\ \midrule 
 $\mathfrak{aff}(\R)\oplus\ \R$ \hspace*{5mm} & $[e_1,e_2]=e_2$ & $\left\{ \left( \begin{smallmatrix}
     1 & 0 & 0 \\
     a & b & 0 \\
     c & 0 & d
 \end{smallmatrix}\right): {\tiny bd\neq 0}\right\}$\\ \midrule
 $\mathfrak{h}(1)$ & $[e_1,e_2]=e_2, [e_1,e_3]=e_3$ & $\left\{ \left( \begin{smallmatrix}
     1 & 0 & 0 \\
     a & b & c \\
     d & e & f
 \end{smallmatrix}\right): {\tiny bf-ce\neq 0}\right\}$\\ \midrule
   $\mathfrak{psh}$ & $[e_1,e_2]=e_2, [e_1,e_3]=e_2+e_3$ & $\left\{ \left( \begin{smallmatrix}
     1 & 0 & 0 \\
     a & b & c \\
     d & 0 & b
 \end{smallmatrix}\right): {\tiny b\neq 0}\right\}$\\ \midrule
  $\mathfrak{h}(\lambda),\,0 <|\lambda|<1$ & $[e_1,e_2]=e_2, [e_1,e_3]=\lambda e_3$ & $\left\{ \left( \begin{smallmatrix}
     1 & 0 & 0 \\
     a & b & 0 \\
     c & 0 & d
 \end{smallmatrix}\right): {\tiny bd\neq 0}\right\}$\\
  \midrule
  $\mathfrak{e}(\mu),\, \mu> 0$ & $[e_1,e_2]=\mu e_2+ e_3, [e_1,e_3]=\mu e_3-e_2$ & $\left\{ \left( \begin{smallmatrix}
     1 & 0 & 0 \\
     a & b & -c \\
     d & c & b
 \end{smallmatrix}\right): {\tiny b^2+c^2\neq 0}\right\}$\\
\bottomrule 
\end{tabular}   \smallskip
\caption{Lie algebras of dimension 3 and their automorphism groups.}\label{Table:LA}
\end{table}
}

\subsection{Strategy and methods}

Let $(G,g)$ be a 3-dimensional connected, simply connected metric Lie group, that is, a Lie group $G$ equipped with a left-invariant (Riemannian or Lorentzian) metric $g$.  Let $L$ denote the full isometry group of $(G,g)$ and $I$ the isotropy subgroup of $L$ at the identity $e_G \in G$, that is, the subgroup of isometries $\Phi$ such that $\Phi(e_G)=e_G$. Denote also by $\g$, $\mathfrak{L}$, and $\mathfrak{I}$ the Lie algebras of $G$, $L$ and $I$, respectively. We will call $\LL$ the Killing algebra of $(G,g)$.

Our strategy is as follows. We consider a Lie algebra $\LL$ of dimension $4$ which is assumed to have a 3-dimensional Lie subalgebra $\g$ which is transversal to a 1-dimensional subalgebra $\I$, that is, $\I \oplus \g = \LL$ (as vector spaces). We split the analysis into two main cases: $\LL$ solvable and $\LL$ not solvable. If $\LL$ is solvable then its derived subalgebra $\NN=[\LL, \LL]$ is nilpotent. Thus, $\NN$ can be either abelian (of dimensions 1, 2 or 3) or the 3-dimensional Heisenberg Lie algebra $\heis$. In the abelian case, we prove that $\NN$ cannot be isomorphic to $\R^3$ or $\R$. We proceed with the analysis in the cases where $\NN$ is either $\R^2$ or $\mathfrak{heis}$. When $\LL$ is not solvable, it is either $\mathfrak{sl}(2,\R)\oplus \R$ or $\mathfrak{so}(3)\oplus \R$. In this case, if $\g$ is solvable then $\g = \mathfrak{aff}(\R)\oplus \R$ and if $\g$ is not solvable then $\g$ is simple, either $\mathfrak{sl}(2,\R)$ or $\mathfrak{so}(3)$. In the simple Lie algebra case, the Killing form will be an important tool.

Our methods are based on investigating the type of infinitesimal isotropy representation  $\iso: \I \longrightarrow \mathfrak{so}(\LL/\I)$.  The Lie algebra $\mathfrak{so}(\LL/\I)$ is identified with $\mathfrak{so}(3)$ in the Riemannian case and with $\mathfrak{so}(2,1)$ in the Lorentzian case. By type of infinitesimal isotropy, we mean that we take a generator $U$ of $\I$ and consider if $\iso(U)$ is of elliptic, hyperbolic or parabolic type  in $\mathfrak{so}(2,1) \simeq \mathfrak{sl}(2,\R)$.  For $\mathfrak{so}(3)$, the isotropy is always of elliptic type (see Subsec. \ref{subsec:inft-isom} for further details). Once the isotropy representation is determined by analysis of the Lie algebra data, we consider which metrics $m$ are such that $\iso(U)$ is an infinitesimal isometry of $m$.  We then apply the action of the automorphism group of $\g$ and scaling on the space of left-invariant metrics to derive the metric normal form.

\subsection{Transversality vs. transitivity} The linear algebra approach which we described in the previous subsection has some important subtleties. If $I$  is the isotropy subgroup at $e_G$ and we identify $G$ with the  left translations $L(G)$, then the full isometry group $L$ is given by $L=I.G$, but there is no straightforward way of determining the algebraic structure of $L$ from the ones of $I$ and $G$.  In particular, $I.G$ is not necessarily a semidirect product. Furthermore, $G$ is normal in $L$ if and only if the connected component of the identity in $I$ is contained in $\Aut(\g)$, see \cite{Shin} or Subsec. \ref{subsec:isom-aut}. 

Now, suppose we start with the algebraic data $(\LL, \g, \I)$. From the transversality assumption $\g \oplus \I = \LL$, it is only possible to know a priori that $G$ acts on $L/I$ with an open orbit. A crucial point is that the purely algebraic approach can also capture incomplete Killing vector fields which only integrate to local isometries. For instance, in Section \ref{sec:g-solv-L-non-solv}, we present the example of the product of 2-dimensional de Sitter space with $\R$ $,(\mathrm{dS_2}, ds^2)\times (\R, dt^2)$, which satisfies all of our assumptions but it does not have, nonetheless, a transitive action of any 3-dimensional Lie group. In section \ref{transitivity}, we prove that the action of $G$  on $L/I$ is necessarily transitive except in the following two cases: (i) $[\LL,\LL] = \R^2 \not \subset \g$ and (ii) $\g$ is solvable but $\LL$ is reductive. It is interesting to observe that these exceptional cases always appear for $\g = \mathrm{aff}(\R)\oplus \R$. In the cases where transitivity is not guaranteed, then further considerations are required, and the necessary analysis is provided also in Sec. \ref{transitivity}.

\begin{remark}
    There is a more general abstract framework on triplets $(L, G, I)$, where $G$ and $I$ are subgroups of $L$, such that $L = I.G$
 (which means that $G$ acts transitively on $L/I$ or alternatively that $I$ acts transitively on $L/G$) without assuming necessarily these actions to be free (equivalently that Lie subalgebras are not necessarily in direct sum). There are classification results, particularly by Onishchik \cite{Onish-TTTG}, but all this with the strong assumption that $L$ is simple (sometimes semi-simple), and specially  that $G$ and  $I$ are reductive. Our situation here is completely different. 
\end{remark}

\subsection{Main results} The contents of this article can be neatly summarized in Tables \ref{Table:space-forms} and \ref{Table:Isot-1dim}. When we set out to determine which metric Lie groups have 4-dimensional isometry group, we will naturally encounter the space forms, since the 6-dimensional full isometry Lie algebra of a space form will have 4-dimensional subalgebras. We identify them by computing their curvature tensors and observing if they are Einstein (in dimension 3, this is equivalent to constant sectional curvature) and checking if the metric is geodesically complete. For the completeness results, we make use of the work of Broomberg and Medina \cite{BrombergMedina} in the unimodular case and of two other articles in our program \cite{CFZ-partI, CF-partII}.
It is well-known that, for a fixed dimension, signature and sectional curvature, all space forms are isometric \cite[Ch. 8, Prop. 23]{ONeill}. The question of which metric Lie groups realize space forms is a natural one. The flat case has been considered by Boucetta and Lebzioui, \cite{BoucettaLebzioui}. In the 3-dimensional case, Table \ref{Table:space-forms} presents (up to automorphism and scaling) a full answer to this question.

{\small
\begin{table}[H]
\begin{tabular}{llc} \toprule  
$\g$ &  Metric normal forms & Sectional curvature\\ \toprule
 $\R^3$ \qquad & $(e^1)^2+(e^2)^2+(e^3)^2$ & 0\\
 &  $(e^1)^2+(e^2)^2-(e^3)^2$ & 0\\ \midrule
$\mathfrak{so}(3)$ &  $(e^1)^2+(e^2)^2+(e^3)^2$ & + \\ \midrule
$\mathfrak{sl}(2,\R)$  &  $(e^1)^2+2(e^2e^3)$ & -- \\ \midrule
$\mathfrak{heis}$ &  $(e^1)^2+2(e^2e^3)$ & 0 \\ \midrule
$\mathfrak{euc}(2)$ & $(e^1)^2+(e^2)^2+(e^3)^2$ & 0 \\
 & $(e^3)^2+(e^2)^2 - (e^1)^2$ & 0 \\ \midrule 
 $\mathfrak{sol}$ &  $(e^1)^2+2(e^2e^3)$ & 0 \\ \midrule
 $\mathfrak{aff}(\R)\oplus \R$ \hspace*{5mm}& $(e^1)^2-(e^3)^2+2(e^2e^3)$ & -- \\ \midrule
$\mathfrak{h}(1)$  & $(e^1)^2+(e^2)^2 + (e^3)^2$ & -- \\ \midrule
$\mathfrak{e}(\mu), \mu >0 $  & $(e^1)^2+(e^2)^2 + (e^3)^2$ &  -- \\ 
 \bottomrule
\end{tabular} \smallskip
\caption{Metric Lie algebras whose Killing algebra is 6-dimensional.}\label{Table:space-forms}
\end{table}
}

Table \ref{Table:space-forms} shows, in particular, that $G=\mathrm{Aff}^+(\R)\times \R$ is the only connected, simply connected  non-unimodular Lie group which can be equipped with a left-invariant Lorentzian metric $g$ such that $(G,g)$ is a space form. This Lie group $G$ has two other (non-equivalent) metrics of constant negative sectional curvature. One of them has full isometry group of dimension 4 and appears in the discussion below. The other is a remarkable case of a metric of constant sectional curvature whose full isometry group is 3-dimensional, the connected component of the identity consists only of left translations (see Subsec. \ref{Subsec:FR-sec-curv} for more details). 

Table \ref{Table:Isot-1dim} presents all the metric connected and simply connected Lie groups (represented by their Lie algebras) whose isometry group is exactly of dimension 4. We add the isotropy type in each case, as well as if $\g$ is an ideal in $\LL$ or not. Also, we include the derived subalgebra of $\LL$, $[\LL,\LL]$, so that the subsection where each metric appears can be easily identified.   

\smallskip

{\small
\begin{table}[H]
\begin{tabular}{*3l*2c} \toprule 
$\g$ &  Metric normal forms & Isotropy type & [$\LL$, $\LL$]  & $\g$ ideal in $\LL$ \\ \toprule
$\mathfrak{so}(3)$ & $(e^1)^2+(e^2)^2+\alpha (e^3)^2, \alpha \neq 0,1$ & elliptic & $\mathfrak{so}(3)$ & yes\\ \midrule
$\mathfrak{sl}(2, \R)$  & $(e^1)^2+2(e^2e^3)+\alpha(e^2-e^3)^2, \alpha\neq 0,\tfrac{1}{2}$ & elliptic & $\mathfrak{sl}(2,\R)$ & yes\\ 
  & $(e^1)^2+2\alpha(e^2e^3), \alpha\neq 0,1$ & hyperbolic & $\mathfrak{sl}(2,\R)$ & yes\\ 
    &  $(e^1)^2+2(e^2e^3)+\varepsilon (e^2)^2, \varepsilon = \pm 1$  & nilpotent & $\mathfrak{sl}(2,\R)$ & yes\\ \midrule
$\mathfrak{heis}$  & $(e^1)^2+(e^2)^2+\varepsilon(e^3)^2, \varepsilon=\pm 1$ & elliptic & $\mathfrak{heis}$ & yes \\ 
   & $(e^1)^2-(e^2)^2+(e^3)^2$ & hyperbolic & $\mathfrak{heis}$ & yes\\ \midrule
$\mathfrak{sol}$  & $(e^2)^2+2(e^1e^3)$ & nilpotent & $\mathfrak{heis}$ & no \\ \midrule
$\mathfrak{aff}(\R)\oplus \R$  & $(e^1)^2+2(e^2e^3)$ & hyperbolic & $\R^2$ & yes\\ 
& $(e^3)^2+2(e^1e^2)$ & nilpotent & $\R^2$ & no\\
& $(e^2)^2+2(e^1e^3)$ & nilpotent & $\mathfrak{heis}$ & no\\
& $(e^1)^2+(e^2)^2+\varepsilon (e^3)^2, \varepsilon = \pm 1$ & elliptic & $\mathfrak{sl}(2,\R)$ & no \\
& $(e^1)^2+(e^2)^2+2(e^2e^3)+\alpha (e^3)^2, \alpha >1, \alpha<0$ & elliptic & $\mathfrak{sl}(2,\R)$ & no \\
& $(e^1)^2-(e^2)^2+2(e^2e^3)+\alpha (e^3)^2, -1<\alpha<0$ & elliptic & $\mathfrak{sl}(2,\R)$ & no \\
\midrule
$\mathfrak{h}(1)$  & $(e^1)^2+(e^2)^2-(e^3)^2$ & hyperbolic & $\R^2$ & yes\\
  & $(e^3)^2+(e^2)^2-(e^1)^2$ & elliptic & $\R^2$ & yes\\
    & $2(e^1e^2)+(e^3)^2$ & nilpotent & $\R^2$ & yes\\
\midrule 
$\mathfrak{psh}$  & $2 (e^1e^2)+(e^3)^2$ & nilpotent & $\R^2$ & yes \\ \midrule
$\mathfrak{h}(\lambda), 0<|\lambda| <1$  & $(e^1)^2+2(e^2e^3)$ & hyperbolic & $\R^2$ & yes\\
  & $(e^2)^2+2(e^1e^3)$ & nilpotent & $\mathfrak{heis}$ & no\\
  & $(e^3)^2+2(e^1e^2)$ & nilpotent & $\mathfrak{heis}$ & no\\ \midrule
  $\mathfrak{e}(\mu), \mu>0$ &  $(e^3)^2+(e^2)^2-(e^1)^2$ & elliptic & $\R^2$ & yes \\
  \bottomrule
\end{tabular} \smallskip
\caption{Metric Lie algebras whose Killing algebra is 4-dimensional.}\label{Table:Isot-1dim}
\end{table}
}

Some comments about Table \ref{Table:Isot-1dim} are in order. We start by observing that $\mathfrak{euc}(2)$ does not appear in this table, there are no metrics of 1-dimensional isotropy. The unique metrics on $\mathfrak{psh}$ and on each $\mathfrak{e}(\mu)$ with $\mu>0$ are {of constant sectional curvature} (zero on $\mathfrak{psh}$ and positive on $\mathfrak{e}(\mu)$). On $\mathfrak{h}(1)$, it is known that all Lorentzian metrics are incomplete, \cite{VukmirovicSukilovic}. They are all Einstein and apart from the Riemannian (which corresponds to hyperbolic space $\mathbb{H
}^3)$, all have isometry group of dimension 4. It can be seen from Appendix \ref{Ap:normal-forms} that all metric normal forms on $\h(1)$ are present in Tables \ref{Table:space-forms} and \ref{Table:Isot-1dim}. A similar situation happens for the Heisenberg group, the list contains all normal forms of metrics. We have a Lorentzian flat space form, and all the other metrics have isometry group of dimension 4. However, all metrics on $\heis$ are complete, since $\heis$ is 2-step nilpotent \cite{Guediri-2step}.

 \begin{remark} As described, our methods here are essentially Lie theoretical, therefore coordinate free, and one of the main subtleties in this approach is that completeness of Killing vector fields does not necessarily hold for Lorentzian Lie groups. The inverse question can be considered: does completeness of all Killing fields imply completeness of the metric? We provide a counter-example to this statement in Subsec. \ref{Subsec:FR-sec-curv}.  
\end{remark}

 \subsection{Discrete isotropies} 
 
 The case where $\dim I = 0$ means that $\dim L = 3$, and that the isotropy $I $ is a discrete group. 
   In the Riemannian case, it is trivial that $I$ is finite, since 
$I$ is discrete in the compact group $\mathrm{O}(3)$. Full isometry groups of left-invariant Riemannian metrics have been computed in the unimodular case in \cite{HaLee12} and in the non-unimodular case in \cite{CosgayaReggiani}. We now focus on the Lorentzian case.

 Since $L \supset G$, $G$ equals $L^0$, the identity component of $L$, and is in particular normal in $L$. Since $L = I.G$ and $I \cap G = 1$, we deduce that 
 $L$ is a semi-direct product $L  =  I \ltimes G$.  So $I $ acts by automorphisms on $\g$ and preserves the Lorentz product on it. 
 But the group of automorphisms of a Lie algebra is algebraic. Also, the group $I^\prime$ of those automorphisms preserving a scalar product 
 is an algebraic group. Since $G$ is simply connected, $I^\prime$ acts on $G$, and hence is contained in the isotropy group of 
 1 for the given metric $h$ on $G$, that is $I^\prime = I$. Finally, an algebraic group has finitely many connected components, and since $I$ is discrete,  $I$ is finite. Now, a finite group  $I \subset \mathrm{O}(2, 1)$,   as any compact subgroup, is contained in a maximal compact subgroup of $\mathrm{O}(2,1)$ which is conjugate to $\mathrm{O}(2)$. Therefore, if $L$ is the isotropy subgroup of a Lorentzian metric, $I$ is a finite subgroup of $\mathrm{O}(2)$.

 \begin{remark}
 It is surely a natural and interesting question, that we do not consider here, to know which finite subgroups of $\mathrm{O}(2)$ can be realized exactly as the isotropy subgroup 
 of a 3-dimensional Lorentzian group?
 \end{remark} 

 \bigskip

\section{Preliminaries}

Let $(G,g)$ be a 3-dimensional metric Lie group, that is, a Lie group $G$ equipped with a left-invariant (Riemannian or Lorentzian) metric $g$. We will assume that $G$ is connected and simply connected (see \cite[Ch. 9, Prop. 20]{ONeill}).  Let $L$ denote the full isometry group of $(G,g)$ and $I$ the isotropy subgroup of $L$ at the identity $e_G \in G$. Denote also by $\g$, $\mathfrak{L}$, and $\mathfrak{I}$ the Lie algebras of $G$, $L$ and $I$, respectively. We shall be referring to $\mathfrak{L}$ as the Killing algebra of $(G,g)$ and to its derived subalgebra $[\mathfrak{L},\mathfrak{L}]$ as the derived Killing algebra. 

An important tool in what follows will be the isotropy representation $\mathrm{Iso}: I \longrightarrow \mathrm{O}(\LL/\I)$ and, more precisely, the infinitesimal isotropy representation $\iso: \I \longrightarrow \mathfrak{so}(\LL/\I)$ which is the quotient representation on $\LL/\I$  of the infinitesimal adjoint representation of $\LL$ restricted to $\I$. Clearly $\mathfrak{so}(\LL/\I)$ is isomorphic to $\mathfrak{so}(3)$ in the Riemannian case, and to $\mathfrak{so}(2,1)$ in the Lorentzian setting. We shall call the elements in $\mathfrak{so}(\LL/\I)$ infinitesimal isometries and detail some of their properties below.

\subsection{Infinitesimal Lorentzian isometries}\label{subsec:inft-isom}

Let $(V, \langle - , - \rangle)$ be a linear Lorentzian space, that is, a vector space $V$ of dimension $n$ equipped with a scalar product of signature $(n-1,1)$. 
 
\begin{definition}
A linear endomorphism $A: V \longrightarrow V$ is said to be an \emph{infinitesimal Lorentzian isometry} if $A$ is skew-symmetric with respect to $\langle -, - \rangle$, i.e.
\begin{equation*}
    \langle A(u),v \rangle + \langle u, A(v) \rangle =0
\end{equation*}
for all $u,v\in V$.
\end{definition}

In dimension $n=3$, infinitesimal Lorentzian isometries have three types according to their eigenvalues. 

\subsubsection{Hyperbolic type}

$A$ has 3 real eigenvalues not all vanishing. 

It can be easily shown that $A = \mathrm{diag}(0, \lambda, -\lambda)$, with $\lambda \neq 0$, and such that the 0-eigenvector is spacelike. The exponential map of $A$, $\mathrm{exp}(A)= \mathrm{diag}(1,\mathrm{e}^\lambda, \mathrm{e}^{-\lambda})$, is thus a linear isomorphism of $V$ which preserves a spacelike direction $\ell$ and hence its orthogonal complement $\mathcal{P}$ is invariant and timelike plane.    

\subsubsection{Parabolic (or nilpotent) type} The only eigenvalue of $A$ is zero with multiplicity 3. 

Then $A$ is nilpotent. It can be readily shown, from the non-degeneracy of $\langle -, - \rangle$  that the rank of $A$ cannot be 1. This means that $A$ cannot be 2-nilpotent. Therefore there exists $u\in V$ such that $\{A^2(u), A(u), u\}$ is a basis of $V$ and in which the matrix of $A$ is written as $\left(\begin{smallmatrix} 0 & 1 & 0 \\ 0 & 0 & 1\\ 0 & 0 & 0\end{smallmatrix}\right)$. The exponential of $A$, $\mathrm{exp(A)}$, is unipotent and has a fixed lightlike direction $\ell$ (the span of $A^2(u)$) and, therefore, also fixes the orthogonal lightlike plane to $\ell$ (which contains $\ell$).

\subsubsection{Elliptic type} $A$ has one real and two complex conjugate eigenvalues.

In this case, it can be easily shown that the real eigenvalue is zero with timelike eigenvector and that the complex conjugate eigenvalues are in fact imaginary with spacelike eigenvectors.
The exponential of $A$, $\mathrm{exp}(A$) acting as an isomorphism of $V$ preserves a timelike direction $\ell$ and preserves its orthogonal plane $\mathcal{P}$ which is then spacelike. Furthermore, $\mathrm{exp}(A)$ acts as a rotation on $\mathcal{P}$.

\subsection{Infinitesimal Riemannian isometries}
    The Riemannian counterpart is easily described. A Euclidean infinitesimal isometry is a linear endomorphism $A: V \longrightarrow V$ which is skew symmetric with respect to a (fixed) positive definite scalar product. Clearly, all such maps are of elliptic type.

\begin{remark}
From the discussion in the 3-dimensional case above, we conclude, in particular, that the rank of any non-trivial infinitesimal isometry (Riemannian or Lorentzian) is 2 and that its trace is 0.    
\end{remark}

\subsection{Isometric automorphisms}\label{subsec:isom-aut}

Given a Lie group $G$, we have an identification of $G$ in the diffeomorphism group of $G$, $\mathrm{Diff}(G)$, via left translations $L(G)$. The normalizer of $G$ in $\mathrm{Diff}(G)$ is 
$$N(G) = \{\varphi\in\mathrm{Diff}(G): \varphi L_g \varphi^{-1} \in L(G), \text{ for all } g\in G\},$$
where the multiplication is obviously given by composition. 

\begin{proposition}
 The normalizer of $G$ in $\mathrm{Diff}(G)$ is the group of affine transformations of $G$, that is, $N(G) = \mathrm{Aut}(G)\ltimes G$.
\end{proposition}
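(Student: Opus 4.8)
The plan is to prove the two set equalities $N(G) = \mathrm{Aut}(G)\ltimes G$ by showing both inclusions and identifying the group structure. First I would establish that $\mathrm{Aut}(G)\cdot L(G) \subseteq N(G)$, which is the easy direction. If $\varphi\in\mathrm{Aut}(G)$, then for any $g\in G$ one computes $\varphi\circ L_g\circ\varphi^{-1}(x) = \varphi(g\cdot\varphi^{-1}(x)) = \varphi(g)\cdot x = L_{\varphi(g)}(x)$, using that $\varphi$ is a homomorphism. Hence $\varphi L_g\varphi^{-1}=L_{\varphi(g)}\in L(G)$, so every automorphism normalizes $L(G)$. Since left translations themselves trivially normalize $L(G)$ (as $L(G)$ is a group under composition, $L_h L_g L_h^{-1}=L_{hgh^{-1}}$), the product $\mathrm{Aut}(G)\cdot L(G)$ lies in $N(G)$.

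The substantive direction is $N(G)\subseteq \mathrm{Aut}(G)\cdot L(G)$. Here I would take an arbitrary $\varphi\in N(G)$ and \emph{correct it by a translation} so as to fix the identity. Set $h=\varphi(e_G)$ and define $\psi = L_{h^{-1}}\circ\varphi = L_h^{-1}\circ\varphi$; then $\psi\in N(G)$ as well (being a composition of a normalizer element with a left translation), and $\psi(e_G)=e_G$. The claim reduces to showing that any $\psi\in N(G)$ fixing $e_G$ is a Lie group automorphism of $G$. By the normalizer condition, for each $g\in G$ there exists a (unique, since $G$ acts freely by left translations) element $\sigma(g)\in G$ with $\psi L_g\psi^{-1}=L_{\sigma(g)}$. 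Evaluating this identity at $e_G$ gives $\psi(g\cdot\psi^{-1}(e_G)) = \sigma(g)$; using $\psi^{-1}(e_G)=e_G$ this yields $\psi(g)=\sigma(g)$, so $\sigma=\psi$. Now rewriting $\psi L_g = L_{\psi(g)}\psi$ and evaluating both sides at an arbitrary $x\in G$ gives $\psi(g\cdot x) = \psi(g)\cdot\psi(x)$, i.e. $\psi$ is a multiplicative bijection, hence an automorphism of the abstract group $G$. Since $\psi$ is also a diffeomorphism, it is a Lie group automorphism, so $\psi\in\mathrm{Aut}(G)$ and $\varphi = L_h\circ\psi\in L(G)\cdot\mathrm{Aut}(G)$.

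It remains to identify the product structure as the semidirect product $\mathrm{Aut}(G)\ltimes G$. I would note that $L(G)\cong G$ is normal in $N(G)$: the normalizer condition is precisely the statement that conjugation by any $\varphi\in N(G)$ preserves $L(G)$, and the computation above shows this conjugation action of $\varphi=L_h\psi$ on $L_g$ is $L_g\mapsto L_{\psi(g)}$ followed by the inner translation, realizing $\mathrm{Aut}(G)$ as acting on the normal factor $G$ in the standard way. Combined with $\mathrm{Aut}(G)\cap L(G)=\{\mathrm{id}\}$ (an automorphism fixes $e_G$, and a nontrivial left translation does not) and the decomposition $N(G)=\mathrm{Aut}(G)\cdot L(G)$ just proved, the standard recognition criterion for internal semidirect products gives $N(G)=\mathrm{Aut}(G)\ltimes G$ with $\mathrm{Aut}(G)$ acting by its natural action on $G$.

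The main obstacle is the reduction step in the second paragraph: one must argue carefully that the map $\sigma:G\to G$ furnished by the normalizer condition is well-defined and equals $\psi$ itself, which rests on the \emph{freeness} of the left-translation action (so that $L_{\sigma(g)}$ determines $\sigma(g)$ uniquely). Once this identification $\sigma=\psi$ is in hand, the homomorphism property $\psi(gx)=\psi(g)\psi(x)$ drops out by evaluation, and the smoothness of $\psi$ (inherited from $\varphi$ being a diffeomorphism) upgrades the abstract automorphism to a Lie group automorphism without extra work.
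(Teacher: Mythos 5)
Your proof is correct and follows essentially the same route as the paper: reduce to a normalizer element fixing $e_G$ by composing with a left translation, use the normalizer condition evaluated at $e_G$ to identify $\psi L_g\psi^{-1}$ with $L_{\psi(g)}$, and deduce the homomorphism property. Your extra care in verifying the internal semidirect-product structure ($L(G)$ normal, trivial intersection with $\mathrm{Aut}(G)$) is a welcome addition that the paper leaves implicit.
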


\begin{proof}
       Let $\varphi\in N(G)$, it is sufficient to suppose that $\varphi(e_G)=e_G$ and show that $\varphi$ is an automorphism (since this can easily be obtained up to composing $\varphi$ by a translation, and translations are trivially group normalizing).

For $g\in G,$ the composition $\varphi L_g\varphi^{-1}$ must be some $L_h$ since $\varphi \in N(G)$.
In fact, $h$ is obtained by the image of $e_G$ by $\varphi L_g\varphi^{-1}$ which is $\varphi(g)$.
Now let us compute $\varphi L_{g_1g_2}\varphi^{-1}$ which is $L_{\varphi(g_1g_2)}$, that is
$$ \varphi L_{g_1}L_{g_2}\varphi^{-1} = \left(\varphi L_{g_1}\varphi^{-1} \right) \left( \varphi L_{g_2}\varphi^{-1}\right) = L_{\varphi(g_1)}L_{\varphi(g_2)} = L_{\varphi(g_1)\varphi(g_2)}.$$
Hence, since $L_{\varphi(g_1g_2)}=L_{\varphi(g_1)\varphi(g_2)}$ we conclude that $\varphi(g_1g_2)=\varphi(g_1)\varphi(g_2)$. Therefore, $\varphi$ is an automorphism of $G$. The converse is clear since, for $\varphi \in \mathrm{Aut}(G)$ and $g\in G$, $\varphi L_g \varphi^{-1} = L_{\varphi(g)}$.  
\end{proof}

 The proposition above immediately yields the following.

\begin{corollary}
Let $(G,g)$ be a metric Lie group. If $I$ is the isotropy group of $(G,g)$ at the identity element of $G$, then the isotropy action of $I$ on $G$ is via automorphisms of $G$ if and only if $I \subset N(G)$. 
\end{corollary}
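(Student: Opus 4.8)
The plan is to read this off directly from the preceding Proposition, whose content is the identification $N(G) = \Aut(G) \ltimes G$ of the normalizer of $G$ in $\mathrm{Diff}(G)$ with the group of affine transformations. The one structural fact I would exploit throughout is that every element $\Phi \in I$ is, by definition of the isotropy group at $e_G$, an isometry satisfying $\Phi(e_G) = e_G$; this is precisely the hypothesis that pins down which factor of the semidirect product an isotropy element can belong to.

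For the forward implication, suppose the isotropy action of $I$ on $G$ is via automorphisms, that is, $I \subseteq \Aut(G)$. Since an automorphism $\varphi$ satisfies $\varphi L_g \varphi^{-1} = L_{\varphi(g)} \in L(G)$ for all $g \in G$, we have $\Aut(G) \subseteq N(G)$ (this inclusion is exactly the converse direction already established at the end of the proof of the Proposition). Hence $I \subseteq N(G)$, as required.

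For the reverse implication, I would assume $I \subseteq N(G)$ and fix an arbitrary $\Phi \in I$. Then $\Phi$ lies in $N(G)$ and additionally fixes the identity, $\Phi(e_G) = e_G$. But the core step in the proof of the Proposition shows precisely that any element of $N(G)$ fixing $e_G$ must be a group automorphism of $G$: one reduces to the identity-fixing case and the normalizing condition then forces $\Phi(g_1 g_2) = \Phi(g_1)\Phi(g_2)$. Applying this to each $\Phi \in I$ yields $I \subseteq \Aut(G)$, i.e., the isotropy action is via automorphisms.

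I do not expect a genuine obstacle here, as the statement is a direct corollary of the Proposition; the only point requiring care is the bookkeeping about the semidirect product, namely that the identity-fixing property of isotropy elements is exactly what guarantees that they sit inside the $\Aut(G)$-factor rather than being general affine maps carrying a nontrivial translation part. Once that observation is recorded, both inclusions are immediate.
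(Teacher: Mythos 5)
Your proposal is correct and matches the paper's intended argument: the paper simply states that the Proposition "immediately yields" the Corollary, and your two inclusions (automorphisms normalize, and identity-fixing normalizing elements are automorphisms) are exactly the content being invoked. The observation that the identity-fixing property of isotropy elements kills the translation part is the right bookkeeping step.
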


Similar statements to the above corollary have been established and frequently used in the literature, cf. for instance \cite{Shin}.

At the Lie algebra level, given the Lie algebra $\LL$ of the isometry group and the infinitesimal isotropy $\I$, we have a canonical identification of $\LL/\I$ with $\g$ when $\g$ is a reductive complement of $\I$, which, since $\g$ is a subalgebra of $\LL$, means that $\g$ is an ideal of $\LL$. 

\begin{corollary}\label{cor:isotropy}
    If $\g$ is an ideal in $\LL$, the infinitesimal isotropy $\I \longrightarrow \mathfrak{so}(\LL/\I) \simeq \mathfrak{so}(\g)$ acts by derivations of $\g$. If $I_o$ is the connected component of the identity in $I$ then the isotropy $I_o \longrightarrow \mathrm{SO}(\g)$ acts by automorphisms of $\g$.
\end{corollary}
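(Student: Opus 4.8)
The plan is to prove the two assertions in turn, the first being purely infinitesimal and the second obtained from it by integration.

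For the first assertion I would start from the hypothesis that $\g$ is an ideal of $\LL$ complementary to $\I$, so that $\LL = \I \oplus \g$ and the projection $\LL \to \LL/\I$ restricts to a linear isomorphism $\g \xrightarrow{\sim} \LL/\I$. Fix $U \in \I$. Because $\g$ is an ideal, $\ad_U(\g) = [U,\g] \subseteq \g$, so $\ad_U$ restricts to an endomorphism of $\g$; under the identification $\LL/\I \cong \g$ this restriction is precisely the quotient operator $\iso(U)$, since for $X \in \g$ the class of $[U,X]$ modulo $\I$ corresponds to $[U,X] \in \g$ itself. It then remains to check that $\ad_U|_\g$ is a derivation of $\g$, which is immediate from the Jacobi identity: for $X,Y \in \g$,
\[
\ad_U([X,Y]) = [U,[X,Y]] = [[U,X],Y] + [X,[U,Y]] = [\ad_U(X),Y] + [X,\ad_U(Y)].
\]
Thus $\iso(\I) \subseteq \mathrm{Der}(\g)$, and since its elements lie in $\mathfrak{so}(\LL/\I) \cong \mathfrak{so}(\g)$ by construction, we in fact obtain $\iso(\I) \subseteq \mathrm{Der}(\g) \cap \mathfrak{so}(\g)$.

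For the second assertion I would pass to the group level. The group isotropy representation $\mathrm{Iso}\colon I \to \SO(\LL/\I) \cong \SO(\g)$ has differential $\iso$, so the image of the connected component $I_o$ is the connected subgroup of $\SO(\g)$ generated by $\exp(\iso(\I))$, whose Lie algebra is $\iso(\I)$. Recalling that $\mathrm{Der}(\g) = \Lie(\Aut(\g))$ and that $\Aut(\g)$ is a closed (indeed algebraic) subgroup of $\GL(\g)$, the inclusion $\iso(\I) \subseteq \mathrm{Der}(\g)$ from the first part forces this connected subgroup to lie in the identity component $\Aut(\g)_o$. Hence $\mathrm{Iso}(I_o) \subseteq \Aut(\g) \cap \SO(\g)$, that is, $I_o$ acts by metric-preserving automorphisms of $\g$, as claimed.

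The bulk of the argument is elementary; the one point that requires genuine care is the identification of the abstract quotient operator $\iso(U)$ on $\LL/\I$ with the concrete restriction $\ad_U|_\g$, which relies essentially on $\g$ being \emph{simultaneously} a vector-space complement to $\I$ and an ideal of $\LL$. Once this identification is in place, both the derivation property (via Jacobi) and the integration to automorphisms (via $\Lie(\Aut(\g)) = \mathrm{Der}(\g)$ together with the connectedness of $I_o$) are standard.
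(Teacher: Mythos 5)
Your argument is correct and follows essentially the same route the paper intends: the paper states this corollary without an explicit proof, relying precisely on the canonical identification $\LL/\I\simeq\g$ when $\g$ is an ideal complementary to $\I$, the Jacobi identity to see that $\ad_U|_\g$ is a derivation, and exponentiation of derivations to automorphisms for the statement about $I_o$. Your write-up simply makes these implicit steps explicit, including the one point that genuinely needs checking (that the quotient operator $\iso(U)$ coincides with $\ad_U|_\g$ under the identification), so there is nothing to correct.
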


We note that, by Lie's second theorem, when $G$ is simply connected, the groups $\mathrm{Aut}(G)$ and $\mathrm{Aut}(\g)$ are in one-to-one correspondence. Another interesting point is that, in dimension 3, homogeneous semi-Riemannian manifolds are always reductive when the isotropy is connected, \cite{FelsRenner}. Nevertheless, even in the case where $G$ acts transitively on $L/I$, it is possible to have many reductive complements but none that is a subalgebra, see Subsec.\ref{Subsec:FR-red-comp} for an example.

\bigskip

\section{$L$ solvable with abelian derived Killing algebra}\label{sec:N=R2}

We start with the assumptions that  $\dim L = 4$ and that $L$ is solvable. Since $\LL$ is a solvable Lie algebra then its derived algebra $\mathfrak{N}=[\LL, \LL]$ is nilpotent and of dimension less or equal to 3. Therefore, $\NN$ is necessarily isomorphic to one of the following: $\R^3$, $\R^2$, $\R$ or $\mathfrak{heis}$ (the Heisenberg Lie algebra of dimension 3). 

In this section, we will analyze the case where $\NN$ is abelian. The case where $\NN = \mathfrak{heis}$ will be treated in section \ref{sec:N=heis}. 

\subsection{$\mathfrak{N}$ isomorphic to $\R^3$}

In this case, the Lie algebra $\LL$ is $\R \ltimes_\sigma \R^3$ where $\sigma$ is a representation of the Lie algebra $\R$ on $\R^3$, $\sigma: \R \longrightarrow \mathfrak{gl}(\R^3)$. We consider a basis $\{T,X,Y,Z\}$ such that $\{X,Y,Z\}$ spans $\R^3$ and $T$ is such that $\ad_T|_{\R^3}=\sigma(1)$.

The Lie algebra $\g$ of $G$ is a 3-dimensional Lie algebra that can be identified with a subalgebra of $\LL$.  If $\g = \R^3$, then $G$ is  Minkowski spacetime and therefore a space form. Let us then assume that $\g \neq \R^3$. Thus the intersection of $\g$ with $\R^3$ is necessarily of dimension $2$, let us say, $\g \cap \R^3 = \spann\{X,Y\}$. Also $\g$ contains a vector of the form $T+u$, with $u\in\R^3$, but since $\R^3$ is abelian, we can assume that $T$ is chosen such that $\g=\spann
\{T,X,Y\}$.

Let $U$ be a generator of $\I$. If $U\notin\R^3$, then the isotropy of $U$ is the projection of $\ad_U$ acting on $\R^3$. However, since $\R^3 =[\LL,\LL]$ then $\mathrm{iso}(U)$ would be surjective, which cannot happens since $\mathrm{iso}(U)$ is an infinitesimal Lorentzian isometry.  If $U\in \mathbb{R}^3$, then $(\ad_U)^2=0$ on $\LL$ (and thus on $\LL/\I$). Hence $\mathrm{iso}(U)^2=0$, that is, a 2-step nilpotent infinitesimal Lorentzian isometry, which is not possible from the discussion is Subsection \ref{subsec:inft-isom}.

We conclude that $\NN$ cannot be a 3-dimensional abelian subalgebra of $\LL$.

\subsection{$\NN$ is isomorphic to $\R^2$} We will consider two subcases here: $\NN \subset \g$ and  $\NN\not\subset \g$.

\subsubsection{Case where $\NN\subset \g$}\label{subsec:N=R2-ideal} In this case $\g$ is an ideal of $\LL$ and, thus, we can identify $\LL/\I$ with $\g$. For any $U$ which generates $\I$, we can write $\mathrm{iso}(U): \g \longrightarrow \g$, i.e. $\mathrm{iso}(U)$ is the restriction of $\ad_U$ to $\g$. We choose $T$ with $T\in \LL\backslash \NN$ such that, as a vector space, $\g=\R T\oplus \NN$.  Since $\ad_U$ is a derivation (or equivalently, applying Jacobi identity) we obtain that, for any $V\in \g$ 
\begin{equation}
   \ad_U([T,V]) = [\ad_U T, V] + [T, \ad_U V] \label{eq:jacobi} 
\end{equation}
For $V\in \NN$, we have that $[\ad_U T, V]=0$, since $\ad_U T \in \NN$ and $\NN$ is abelian. Therefore, from \eqref{eq:jacobi}, we can write
\begin{equation*}
(\ad_U\circ \ad_T)(V) = (\ad_T \circ \ad_U)(V)    
\end{equation*}
thus, when restricted to $\NN$, $\ad_U$ and $\ad_T$ commute. 
Since $\iso(U)$ has rank 2 and $\NN$ is the derived subalgebra of $\LL$, then the image of $\iso(U)$ is precisely $\NN$. From the discussion of Subsection \ref{subsec:inft-isom}, we can conclude that, up to scaling, $\iso(U)$ restricted to $\NN$ is conjugate to one of the following forms:
\begin{equation*}
  \begin{pmatrix}
        1 & 0 \\
        0 & -1 
    \end{pmatrix}  \mbox{ (hyperbolic case) }, \quad    \begin{pmatrix}
        0 & 1 \\
        0 & 0 
    \end{pmatrix} \mbox{ (parabolic case) }, \quad  \begin{pmatrix}
        0 & -1 \\
        1 & 0 
    \end{pmatrix}  \mbox{ (elliptic case) }. 
\end{equation*}
Now, since $\ad_T$ commutes with $\ad_U$ and both have $\NN$ has an invariant plane, we can deduce the following about $ad_T|_\NN$ and $\g=\R T\ltimes \R^2$:

\begin{enumerate}
    \item Nilpotent case: $\ad_T$ is given by $\left(\begin{smallmatrix}
        \lambda & \nu \\ 
        0 & \lambda
    \end{smallmatrix}\right)$. If $\lambda,\nu\neq 0$, then $\g=\mathfrak{psh}$. If $\lambda = 0$ and $\nu\neq 0$ then $\g=\mathfrak{heis}$.  If $\lambda \neq 0$ and $\nu$=0, then $\g=\h(1)$. The case $\lambda=\nu =0$ gives $\g=\R^3$ which is Minkowski space and, as before, is excluded from the discussion. 
 \item Hyperbolic case: 
    $\ad_T$ is given by $\left( \begin{smallmatrix}
        \alpha & 0 \\
        0 & \beta 
    \end{smallmatrix} \right)$ and $\g$ is one of the Lie algebras $\mathfrak{h}(\lambda)$, including the unimodular $\mathfrak{sol}$, $\mathfrak{h}=\mathfrak{h}(1)$ and $\mathfrak{aff}(\R)\oplus \R$.
    \item Elliptic case: $ad_T$ is of the form $\left( \begin{smallmatrix}
        a & - b\\
        b & a
    \end{smallmatrix}\right)$ with $a^2+b^2\neq 0$.  If $b=0$, then $\g=\h(1)$. If $b\neq 0$, then $\g$ is one of the Lie algebras $\mathfrak{e}(\mu)$ including (when $a=0$) the unimodular $\mathfrak{euc}(2)$. 
\end{enumerate}

We will now analyze in detail the one-dimensional isotropy metrics that appear in the above discussion.

{\it Nilpotent case:} We have a basis $\{T,X,Y\}$ of $\g$ such that $\iso(U)$ is given by the matrix
\begin{equation*}
A = \begin{pmatrix}
0 & 0 & 0 \\
\alpha & 0 & 1\\
\beta & 0 & 0
\end{pmatrix},  \quad \alpha\in \R, \beta\in\R^\ast.
\end{equation*}
If $q$ is a metric such that $A$ is an infinitesimal isometry of $q$ then
\begin{align*}
0 & =  q(T, A(T)) = \alpha q(T,X)+\beta q(T,Y)  \\ 
0 & = q(T, A(X))+q(A(T),X) = \alpha q(X,X) + \beta q(Y,X)\\
0 & = q(T, A(Y))+q(A(T), Y) = q(T,X)+ \alpha q(X,Y) + \beta q(Y,Y)\\
0 & = q(X, A(X)) = 0\\
0 & = q(X, A(Y))+q(A(X),Y) = q(X,X)\\
0 & = q(Y, A(Y)) = q(Y,X)
\end{align*}
Setting $q(T,T) = a$ and $q(Y,Y)=b$, we can then write
\begin{equation*}
  q=  \begin{pmatrix}
        a & -\beta b & \alpha b\\
        -\beta b & 0 & 0 \\
        \alpha b & 0 & b 
    \end{pmatrix}.
\end{equation*}
Interestingly, for all three Lie algebras, $\mathfrak{heis}, \mathfrak{psh}, \mathfrak{h}(1)$, up to automorphism and scaling, this metric is equivalent to $m$ where the non-vanishing terms are
\begin{equation*}
    m(T,X) = 1, \quad m(Y,Y) = 1
\end{equation*}
and, what is more, $m$ is flat. Therefore, the dimension of the Killing algebra depends on whether $m$ is geodesically complete or not. For $\heis$, $m$ is complete, since all metrics are complete for 2-step nilpotent Lie algebras \cite{Guediri-2step}.  For $\mathfrak{h}(1)$, it is known that all Lorentzian metrics are incomplete, \cite{VukmirovicSukilovic}, then so is $m$. As for $\mathfrak{psh}$ the metric is  also incomplete, \cite{CFZ-partI}. Then,  the dimension of the Killing algebra of $m$ is 6 for the case of $\mathfrak{heis}$ and 4 for the case of $\mathfrak{h}(1)$ and $\mathfrak{psh}$.

\smallskip

{\it Hyperbolic case:}  We have a basis $\{T,X,Y\}$ of $\g$ such that $\iso(U)$ is given by the matrix
\begin{equation*}
A = \begin{pmatrix}
0 & 0 & 0 \\
\alpha & 1 & 0\\
\beta & 0 & -1
\end{pmatrix},  \quad \alpha,\beta\in \R.
\end{equation*}
If $q$ is a metric such that $A$ is an infinitesimal isometry of $q$, then setting $q(T,T)=a$ and $q(X,Y)=b$, we have
\begin{equation*}
  q=  \begin{pmatrix}
        a & -\beta b & \alpha b\\
        -\beta b & 0 & b \\
        \alpha b & b & 0 
    \end{pmatrix}.
\end{equation*}
For all Lie algebras $\mathfrak{sol}, \mathfrak{aff}(\R)\oplus \R, \mathfrak{h}(1), \mathfrak{h}(\lambda)$, the metric $q$ is equivalent, up to automorphism and scaling, to $m$ where
\begin{equation*}
    m(T,T) = 1, \quad m(X,Y) =1
\end{equation*}
are the only non-vanishing terms of $m$. For $\mathfrak{sol}$, the metric $m$ is flat and it is complete, as proved in \cite{BrombergMedina}, thus the dimension of the Killing algebra is 6. As for $\mathfrak{h}(1), \mathfrak{aff}(\R) \oplus \R$ and $\mathfrak{h}(\lambda)$ the metric $m$ is of negative sectional curvature  but in all three cases it is incomplete, \cite{CF-partII}, so indeed the Killing algebra has dimension 4.    
\smallskip

{\it Elliptic case:}  We have a basis $\{T,X,Y\}$ of $\g$ such that $\iso(U)$ is given by the matrix
\begin{equation*}
A = \begin{pmatrix}
0 & 0 & 0 \\
\alpha & 0 & -1\\
\beta & 1 & 0
\end{pmatrix},  \quad \alpha,\beta\in \R.
\end{equation*}
If $q$ is a metric such that $A$ is an infinitesimal isometry of $q$, then setting $q(T,T)=a$ and $q(Y,Y)=b$, we have
\begin{equation*}
  q=  \begin{pmatrix}
        a & \beta b & -\alpha b\\
        \beta b & b & 0 \\
        -\alpha b & 0 & b 
    \end{pmatrix}.
\end{equation*}
For all Lie algebras $\mathfrak{euc}(2), \mathfrak{e}(\mu)$ and $ \mathfrak{h}(1)$, the metric $q$ is equivalent, up to automorphism and scaling, to either $m_R = \mathrm{diag}(1,1,1)$ or $m_L=\mathrm{diag}(-1,1,1)$. The metric $m_R$ is Riemannian and therefore complete. For $\mathfrak{euc}(2)$ and $\mathfrak{e}(\mu)$, $m_R$ is flat, and for $\mathfrak{h}(1)$ is of constant negative sectional curvature. For all these cases, therefore,  the Killing algebra is 6-dimensional. For the Lorentzian metric $m_L$, the situation is as follows. For $\mathfrak{h}(1)$, $m_L$ is of constant negative curvature but incomplete, \cite{VukmirovicSukilovic}, thus the Killing algebra is 4-dimensional.  On $\mathfrak{euc}(2)$, $m_L$ is flat and from \cite{BrombergMedina}, we know that $m_L$ is complete,  thus the Killing algebra has dimension 6.  On $\mathfrak{e}(\mu)$, $m_L$ is of positive sectional curvature but incomplete, thus the Killing algebra has dimension 4.

\subsubsection{Case where $\NN \not\subset \g$}\label{subsec:N-not-g} In this case, $\dim [\g,\g]=1$. Once again, $\g=\mathbb{R}^3$ is excluded then $\dim [\g,\g] \neq 0$ and, also, $\dim [\g,\g]\neq 2$ otherwise $\NN=[\g,\g] \subset \g$. Therefore, $\dim[\g,\g]=1$ and $\g$ must be either $\mathfrak{aff}\oplus \R$ or $\mathfrak{heis}$.

\medskip

(I) {\it Lie algebra $\g=\mathfrak{aff}(\R)\oplus\R$}

Consider a basis $\{X,Y,Z\}$ of $\g$ with a unique non-vanishing bracket $[X,Y]=Y$. Let $T\in \NN$, so that $\{X,Y,Z,T\}$ is a basis of $\LL$. Since $[X,Z]=0$ and $\NN$ is 2-dimensional abelian then $\LL$ is a semi-direct product $\R^2\ltimes \R^2$.

Let $U$ be a generator of $\I$. Then $U\notin \g$ and, also, $U\notin \NN$, otherwise the rank of $\iso(U)$ would be less than 2. We then have  $U= T+aX+bY+cZ$ with $a\neq 0$ or $c\neq 0$. We will show that $\iso(U)$ cannot be of elliptic type. Let $\pi: \LL \longrightarrow \LL/\I$ be the canonical projection. For a given $V\in\LL$ we will write $\overline{V}$ for $\pi(V)$. Then $\{\overline{X},\overline{Y},\overline{Z}\}$ form a basis of $\LL/\I$. 
    Suppose that $[T,X]= \alpha Y + \beta T $ and $[T,Z] = \gamma Y + \delta T$, for some constants $\alpha, \beta, \gamma, \delta \in \R$. In the basis $\{\overline{Y}, \overline{Z}, \overline{X}\}$, $\iso(U)$ is given by the matrix
    \begin{equation*}
        \begin{pmatrix}
            a & \gamma - b\delta & \alpha - b (1+\beta) \\
            0 & -c \delta & -c\beta\\
            0 & -a \delta & -a \beta
        \end{pmatrix}.
    \end{equation*}

   If $\iso(U)$ was elliptic, then $\overline{Y}$ would be an eigenvector with eigenvalue $a=0$. Since the trace must also be zero, then iso(U) would be nilpotent, which is a contradiction. 

   Remark the transitivity is not guaranteed in this case, so we have to proceed with care in our analysis.

\smallskip

{\it Nilpotent isotropy} $(a=0)$: In this case, since $c\neq 0$ then $\delta =0$, and from the Jacobi identity $\beta=-1$. Thus, $\iso(U)$ is written as 
\begin{equation*}
    \begin{pmatrix}
        0 & \gamma  & \alpha\\
        0 & 0 & c \\ 
        0 & 0 &0
    \end{pmatrix},
\end{equation*}
with $\gamma \neq 0$. If $\iso(U)$ is an infinitesimal isometry of a metric $q$ then, setting $q(X,X)=x$, $q(Z,Z)=z$, we have on the basis $\{X,Y,Z\}$ that
\begin{equation*}
q=\begin{pmatrix}
    x & -\frac{c}{\gamma}z & \frac{\alpha}{\gamma}z \\
    -\frac{c}{\gamma}z & 0 & 0 \\
    \frac{\alpha}{\gamma}z & 0 & z
\end{pmatrix}.
\end{equation*}

Using the automorphism group of $\mathfrak{aff}(\R)\oplus \R$ and up to scaling, $q$ can be put in normal form as $m$ such that 
\begin{equation*}
    m(X,Y)=1, \quad m(Z,Z) =1.
\end{equation*}

This metric is flat but it is incomplete, \cite{CF-partII}, therefore the dimension of the isometry Lie algebra cannot be equal to $6$ and so it is either 3 or 4. 

Considering $X'=X-\tfrac{\alpha}{\gamma}Z$, the basis $\{X',Y, Z\}$ spans $\mathfrak{aff}(\R)\oplus\R$ and the non-zero bracket relations in $\LL$ are $[X',Y]=Y$, $[X',T]=T, [T,Z]=\gamma Y$. Therefore $\LL$ can also be written as $\mathfrak{aff}(\R)\ltimes \R^2$. We encounter here the case (5) of Th. 1.1. in \cite{AlloutBelkacemZeghib} which is isometric to half-Minkowski space. Thus, the action of $\mathrm{Aff}^+(\R)\times \R$ on $L/I$ is transitive (see Sec. \ref{transitivity} for more details).   
    
\smallskip

{\it Hyperbolic isotropy} $(a\neq 0)$: In Sec. \ref{transitivity}, it will be proved that $\mathrm{Aff}^+(\R)\times \R$ cannot act transitively on $L/I$, in this case.

\medskip

(II) {\it Lie algebra $\g=\mathfrak{heis}$}

Let $\g=\spann\{X,Y,Z\}$ with $[X,Y]=Z$. Consider another element $T\in \LL$ such that $\NN=\spann\{T,Z\}$.  Since  $\NN$ is abelian,  $[Z,T]=0$. Thus, $Z$ is central in $\LL$. Let $U$ be a generator of the isotropy $\I$. Then, up to scaling, $U=T+aX+bY+cZ$.  Suppose that $[X,T]= \alpha Z + \beta T $ and $[T,Z] = \gamma Z + \delta T$, for some constants $\alpha, \beta, \gamma, \delta \in \R$, with $\beta \neq 0$ or $\delta\neq 0$.  The Jacobi identity for $\LL$ implies that $\alpha\beta-\delta\gamma=0$. In the basis $\{\overline{Z}, \overline{X}, \overline{Y}\}$, $\iso(U)$ is given by the matrix
    \begin{equation*}
        \begin{pmatrix}
            0 & c\beta -\alpha - b & c\delta - \gamma + a \\
            0 & a \beta & a\delta\\
            0 & b \delta & b \delta
        \end{pmatrix}.
    \end{equation*}
 Unsurprisingly, $\overline{Z}$ is a 0-eigenvector of $\iso(U)$. We observe that since the lower $2\times 2$-block matrix has zero determinant and its trace is zero, then $\lambda=0$ is the only eigenvalue of $\iso(U)$. Therefore, $\iso(U)$ is necessarily nilpotent. Consider $V=\delta X - \beta Y$. We have that $V\neq 0$ and $[T,V]=0$. Moreover, $[U,V]=-(b\delta+a\beta) Z =0$. Hence, $V$ is another 0-eigenvector of $\iso(U)$, which cannot happen for a non-trivial Lorentzian infinitesimal isometry. Therefore, this case cannot occur.

\subsection{$\NN$ is isomorphic to $\R$} For every $U\in \I$, $\ad_U$ would be of rank less or equal to 1 and the same conclusion follows for $\mathrm{iso}(U): \LL/\I \longrightarrow \LL/\I$. This cannot happen, since $\I$ is a one-dimensional space of infinitesimal Lorentzian isometries. 

\bigskip

\section{$L$ solvable with Heisenberg derived Killing algebra}\label{sec:N=heis}   

We consider now the case where $\NN$ is isomorphic to the Heisenberg algebra $\heis$. 
In this case, the Lie algebra $\LL$ is an extension of $\mathfrak{heis}$, that is, $\R \ltimes_\sigma \mathfrak{\mathfrak{heis}}$ where $\sigma$ is a representation $\sigma: \R \longrightarrow \mathfrak{der}(\mathfrak{heis})$. It was shown in \cite{AlloutBelkacemZeghib} that the metrics with isometry Lie algebra of this type are  plane waves. We will identify here such plane waves admitting a transitive free action of a 3-dimensional Lie group.

 \subsection{Case where $\g = \mathfrak{heis}$}

In this case $\g$ is an ideal of $\LL$, and by Cor. \ref{cor:isotropy},  for any $U$ generator of $\I$, $\iso(U)$ is an infinitesimal automorphism, or, in other words, a derivation of $\mathfrak{g}$. We fix the standard basis $\{X,Y,Z\}$ such that $[X,Y]=Z$ and recall that
$$\mathfrak{der}(\mathfrak{heis}) = \left\{ \begin{pmatrix}
    a_{1} & a_{2} & 0 \\
    a_{3} & a_{4} & 0 \\
    a_{5} & a_{6} & a_{1} + a_{4} 
\end{pmatrix} \, : \, a_1,\cdots, a_6 \in \R \right\}.$$

Thus, any such derivation $\mathcal{D}$ has $Z$ as an eigenvector. Therefore, if $\mathcal{D}$ is an infinitesimal isometry, $Z$ is in the kernel of $\mathcal{D}$, since the trace of $\mathcal{D}$ is zero. This implies that $\iso(U)$ cannot be of parabolic type. For nilpotent infinitesimal isometries, $Z$ is also in the image and since the rank is 2 then the derived algebra of $\LL$ cannot be $\heis$. Thus, $\iso(U)$ is either elliptic or hyperbolic.

\subsection{Case where $\g \neq \mathfrak{heis}$} Let the Heisenberg Lie algebra $\mathfrak{heis}$ be generated by $\{X,Y,Z\}$ with $[X,Y]=Z$.
We consider a basis $\{T,X,Y,Z\}$ of $\LL$ with $T$ such that $\ad_T|_{\mathfrak{heis}}=\sigma(1)$. In this case, we can consider the isotropy to be $\I = \R X$, since all 1-dimensional subspaces different from $\R Z$ are equivalent under the action of $\mathrm{Aut}(\mathfrak{heis})$. The Lie algebra $\g$ is linearly generated by an element $T+u$, $u\in \mathfrak{heis}$, together with a subalgebra $\mathfrak{n}\subset \mathfrak{heis}$ of dimension 2. Up to automorphism of $\mathfrak{heis}$ preserving $\R X$, we can assume that $\mathfrak{n}=\R Y \oplus \R Z$, and therefore a basis of $\g$ is $\{T+u, Y, Z\}$. Since $u\in \mathfrak{heis}$, $u=T+cX+dY+eZ$, for some scalars $c,d,e\in \R$. We can assume that $d=e=0$, as $T+cX$ and $T+u$ have the same brackets on $\g$. Thus $\g$ has basis $\{T+cX, Y, Z\}$.

Consider the isotropy $\iso(X)$ on $\LL/\R X$. As usual,  we can consider the basis $\{\overline{T+cX}, \overline{Y}, \overline{Z} \}$ of $\LL/I$ given by the projection $\LL \longrightarrow \LL/I$. Given $\alpha, \gamma, \delta \in \R$ such that $[T,X]= \alpha X + \gamma Y + \delta Z$, we have  
\begin{equation*}
    \begin{array}{rl}
        \iso(X)(\overline{T+cX}) & =\, \overline{\ad_X(T)}  =  {-\overline{(\alpha X + \gamma Y + \delta Z)}} = -\gamma\overline{Y}-\delta\overline{Z},  \\
         \iso(X)(\overline{Y}) & =\, \overline{\ad_X(Y)}  =   \overline{Z},\\ 
         \iso(X)(\overline{Z}) & = \, \overline{\ad_X(Z)}  =   0.\\ 
    \end{array}
\end{equation*}
In this basis, $\iso(X)$ is represented by the matrix 
\begin{equation*}
    \begin{pmatrix}
        0 & 0 & 0 \\
        -\gamma & 0 & 0\\
        -\delta & 1 & 0
    \end{pmatrix}.
\end{equation*}
Therefore, $\iso(X)$ must be 3-step nilpotent. If so, then $[T,X]$ has non-trivial $Y$ component, i.e. $\gamma \neq 0$. Since $[\LL,\LL] = \mathfrak{heis}$ and $\g$ is a subalgebra,  then $[\g,\g]\subset \n$. In particular, there exists some $\beta, b \in \R$ such that $[T, Y] = \beta Y + b Z$. In sum, and recalling that $\ad_T|_\heis$ is a derivation of $\heis$, then $\ad_T|_\heis$ is represented by the matrix 
\begin{equation*}
    A= \begin{pmatrix}
        \alpha & 0 & 0 \\
        \gamma & \beta & 0 \\
        \delta & b & \alpha + \beta 
    \end{pmatrix}, \,\, \gamma\neq 0, \alpha \neq 0,  \beta \neq 0.  
\end{equation*}

Notice that $\alpha \neq 0$  and $\beta \neq 0$ are given by the fact that $[\LL, \LL] = \mathfrak{heis}$. Observe, furthermore, that this implies that $\g$ cannot be an ideal of $\LL$. 

\subsection{The Lie algebra $\g$} If $\g \neq \mathfrak{heis}$, then $\g$ is generated by $S=T+cX$, $Y$ and $Z$, with bracket relations given by 
\begin{equation*}
    [S,Y] = \beta Y + (b+c) Z, \quad [S,Z] = (\alpha+\beta)Z, \quad [Y,Z] =0. 
\end{equation*}
Thus $\mathfrak{n}=\R Y \oplus \R Z$ is an abelian subalgebra of $\g$ and $\ad_S$ leaves $\mathfrak{n}$ invariant. Thus if $B={\ad_S}|_\mathfrak{n}$, then
\begin{equation*}
    B = \begin{pmatrix}
     \beta & 0 \\
     b+c & \alpha + \beta 
    \end{pmatrix}.
\end{equation*}
Notice that $B$ has eigenvalues $\beta$ and $\alpha+\beta$, and since $\alpha\neq 0$, they are different, and hence $B$ is diagonalizable. Thus we obtain any semidirect product $\g_B = \R \ltimes_B \R^2$ with $B$ diagonalizable but not proportional to the identity.

If $\alpha + \beta \neq 0$ then, $\g_B$ is isomorphic to one of $\h(\lambda)$, $0<|\lambda|<  1$, or to the unimodular $\mathfrak{sol}$. Notice that, up to replacing $Y$ by $\tilde{Y}=Y-\frac{b}{\alpha+\beta}Z$, we can assume that $b=0$ in the matrix $A$. If $\alpha+\beta =0$, then $\g_B$ is isomorphic to $\mathfrak{aff}(\R)\oplus \R$.

\subsection{The one-dimensional isotropy metrics on $\g$} $\phantom{a}$

 \subsubsection{Case 1: $\g = \mathfrak{heis}$}
 The automorphism group of the Heisenberg Lie algebra 
 acts as $\mathrm{GL}(2,\R)$ on the plane generated by $\{X,Y\}$ and leaves the center spanned by $\{Z\}$ invariant. 
This implies that, up to scaling, there are only four orbits of metrics on $\heis$ (three Lorentzian and one Riemannian) and these are determined the sign of $q(Z,Z)$, see also Appendix \ref{Ap:normal-forms}. 
If $\iso(U)$ is hyperbolic then $Z$ is spacelike, we obtain the metric $q=\mathrm{diag}(-1,1,1)$. If $\iso(U)$ is elliptic then, in the Lorentz case, $Z$ is timelike, and we obtain the metric $q=\mathrm{diag}(1,1,-1)$;  in the Riemannian case, we have $q=\mathrm{diag}(1,1,1)$. It is worth mentioning that none of these metrics have constant sectional curvature.

\subsubsection{Case 2: $\g \neq \mathfrak{heis}$ with  $\alpha+\beta \neq 0$} 
We fix $A$ of the form 
$$A=\begin{pmatrix}  \alpha &0&0\\ \gamma  &  \beta  & 0 \\ \delta &  0 & \alpha + \beta\end{pmatrix},\quad \gamma \neq 0,  \alpha \neq 0, \beta \neq 0, $$
which represents $\ad_T$ restricted to $\heis$.
For $c \in \R$,  the linear subspace generated by $\{T+ c X, Y, Z\}$ is a subalgebra that we denote by $\g^c$ (it depends only on $c$ since $\alpha, \beta$, $\delta$ are fixed).
Observe that all these algebras are isomorphic to $\g_C$ given by $C={\left(\begin{smallmatrix}1&0\\0&\frac{\alpha+\beta}{\beta}\end{smallmatrix}\right)}$.
The algebras $\g^c$ are conjugated in $\Lb$ by means of the one-parameter group determined by $X$. The corresponding groups $G^c$ act freely transitively and isometrically on $L/I$.

 Let us determine the possible left-invariant metrics on $\g^0$. We denote by $q$ the inner products associated to the isotropy $\iso(X)$ on $\Lb/\I$ identified with  $\g^0$.
We write  $S=T+cX$ and  take $q(\overline{S},\overline{S})$  and $q(\overline{Y},\overline{Y})$ to be $a_1$ and $a_2$, respectively. Then, using the fact that  $\iso(X)$ is skew-symmetric with respect to $q$, we get the following
\begin{align*}
q(\overline{Z},\overline{Z})&=q(\overline{Z},\iso(X)\overline{Y})=-q(\iso(X)\overline{Z},\overline{Y})=0,\\
    q(\overline{Y},\overline{Z})&= q(\overline{Y},\iso(X)\overline{Y}) = 0,  \\
q(\overline{S},\overline{Z})&=q(\overline{S},\iso(X)\overline{Y})= -q(\iso(X)\overline{S}, \overline{Z}) = \gamma a_2.
\end{align*}
Finally, since $\overline{Y}= -\frac{1}{\gamma}\iso(X)(\overline{S}+\delta\overline{Y})$, we have 
\begin{align*}
    q(\overline{S}, \overline{Y}) & = \frac{1}{\gamma}q(\iso(X)\overline{S}, \overline{S}+\delta \overline{Y}) = -\frac{1}{\gamma}q(\gamma \overline{Y}+\delta\overline{Z}, \overline{S}+\delta\overline{Y}) = -q(\overline{Y}, \overline{S}) -2\delta a_2,
\end{align*}
and, thus $ q(\overline{S}, \overline{Y}) = \delta a_2$.

Considering the action on the space of metrics by automorphisms of $\Lb/\I$, i.e. automorphisms of $\Lb$ fixing the isotropy $\R X$ (hence, automorphisms of $\g$), we obtain  two families of metrics. Their normal forms are given by either $m_1$ or $m_2$ where
\begin{equation*}
    m_1 = \begin{pmatrix}
        0 & 0 & 1\\
        0 & 1 & 0\\
        1 & 0 & 0
    \end{pmatrix} \quad \text{ and } \quad m_2 =  \begin{pmatrix}
        0 & 1 & 0\\
        1 & 0 & 0\\
        0 & 0 & 1
    \end{pmatrix}.
\end{equation*}
We remark that when computing the orbit of $q$ under the action of the automorphism group of $\mathfrak{h}(\lambda)$, $0<|\lambda|<1$, we have to take into consideration that either $\overline{Y}$ or $\overline{Z}$ can take on the role of $e_3$ as given in Table \ref{Table:LA}. 
An exception is made for the case of the Lie algebra $\mathfrak{sol}$ where these two metrics are, in fact, equivalent by means of the extra component in the automorphism group.

\subsubsection{Case 3: $\g \neq \mathfrak{heis}$ with $\alpha + \beta=0$}

 In this case, $\g$ is $\mathrm{aff}(\R)\oplus\R$. The center of $\g$ is, thus, generated by $Z$, the center of $\mathfrak{heis}$. We remark that $\alpha$ and $\beta$ do not contribute to the components of metric associated to the nilpotent infinitesimal isometry and that the automorphism group of $\mathfrak{aff}(\R)\oplus \R$ is the same as the automorphism group of $\h(\lambda)$, $0<|\lambda|< 1$. Therefore, the metrics in question are equivalent to the metric with normal form $m_1$.

\subsection{Space of plane waves} 
As described in Th. 1.4. of \cite{AlloutBelkacemZeghib}, non-unimodular homogeneous 3-dimensional plane waves are parameterized by a real number which we will denote here by $\sigma$.  By non-unimodular we mean that $\LL$ is a non-unimodular Lie algebra, which is equivalent, in our notation, to having $\mathrm{trace}(\ad_T)\neq 0$, i.e. $\alpha+\beta \neq 0$. Our goal here to obtain a basis $\{Z', X', Y'\}$ of $\heis$ giving a similar parametrization to the one in 
\cite{AlloutBelkacemZeghib}.

Let $\mathcal{D}$ be the derivation of $\heis$ corresponding to $\ad_T$. We consider $\mathcal{D}' = \mathcal{D}+\delta \ad_Y$. Since $\ad_Y$ is an inner derivation then the extensions of $\heis$ given by $\mathcal{D}$ and $\mathcal{D}'$ are isomorphic Lie algebras. 
So we can replace $T$ with $T' = T + \delta Y$. Since $\alpha+\beta \neq 0$ we can rescale $T'$  and consider $\widetilde{T}= \frac{1}{\alpha+\beta}T'$.   Notice that the isotropy element $X$ is fixed, so  the structure of $\LL$ as the isometry Lie algebra of a metric in $\g$  remains unchanged. In the basis $\{Z, X, Y\}$, $\ad_{\widetilde{T}}$ is given by 
\begin{equation*}
    \begin{pmatrix}
        1 & 0 & 0\\
        0 & \alpha & 0\\
        0 & \gamma & 1-\alpha
    \end{pmatrix},\, \gamma\neq 0, \alpha \neq 0,1.
\end{equation*}
We take $X' = X$. For $Y$ we rescale it by some $r\neq0$ and perturb it by $X$, 
hence, $Y\mapsto Y'=sX+rY$ (we can also perturb it by $Z$, but this is enough for our aim).
Since we are only changing the basis by $\Aut(\heis)$, we have no freedom in $Z$, it automatically changes to $Z'= rZ$.
The brackets of $\widetilde{T}$ in the new basis are given by
\begin{align*}
    \ad_{\widetilde{T}}(Z')&=Z',\\
    \ad_{\widetilde{T}}(X')&=\frac{r\alpha-s\gamma}{r}X'+\frac{\gamma}{r} Y',\\
    \ad_{\widetilde{T}}(Y')&=\frac{sr(2\alpha-1)-s^2\gamma}{ r}X'+\frac{r-\alpha r+ s\gamma}{r}Y'.
\end{align*}
Choosing $r=\dfrac{\gamma}{\alpha(\alpha-1)}$ and $s=\dfrac{1}{\alpha -1}$,  we have $\ad_{\widetilde{T}}$ represented by the matrix
$$
D_\sigma=\left(\begin{matrix}  1&0&0\\ 0  &  0  & 1 \\ 0 & \sigma & 1\end{matrix}\right),\ \sigma = \alpha(\alpha-1). 
$$
Hence, for every $\sigma=\sigma(\alpha)\neq0$ the plane wave $P_\sigma$ admits the transitive and free action of the Lie group 
$$G_{\sigma}=\R\ltimes_{\rho_{\sigma}}\R^2;\ \rho_{\sigma}(t)=\exp\left(t\left(\begin{smallmatrix}
    1&0\\0&\frac{1}{1-\alpha}
\end{smallmatrix}\right)\right),\  t\in\R.$$
Remark that $\sigma(\alpha)=\alpha(\alpha-1)>-1/4$, so these are the hyperbolic plane waves of \cite[Th. 1.4]{AlloutBelkacemZeghib}.

\bigskip

\section{$\g$ solvable with non-solvable Killing algebra}\label{sec:g-solv-L-non-solv}

We consider here the case where $\g$ is solvable but $\Lb$ is not solvable.
Then, $\Lb$ is isomorphic to either $\mathfrak{so}(3,\R)\oplus\R$ or $\mathfrak{sl}(2,\R)\oplus\R$. Since $\LL$ has a 3-dimensional solvable subalgebra, $\LL$ is necessarily $\mathfrak{sl}(2,\R)\oplus \R$.  
We have that $\g \cap \mathfrak{sl}(2, \R)$ is isomorphic to $\mathfrak{aff}(\R)$, since the intersection must be a two dimensional subalgebra of $\mathfrak{sl}(2,\R)$ and $\R^2$ is not a possibility. In fact it is not difficult to see that, up to conjugacy, $\g = \mathfrak{aff}(\R) \oplus \R$.

We consider a basis $\{X,Y,Z\} $ of $\mathfrak{sl}(2,\R)$ given by the commutation relations $$[X, Y]=Y, [X, Z]=-Z, \mbox{ and } [Y,Z]=X.$$
The subalgebra given by $\R X\oplus\R Y$ is $\mathfrak{aff}(\R)$, which we will assume be to the $\mathfrak{aff}(\R)$ factor of $\g$.  If we let $T\in\Lb$ be the generator of the $\R$-factor, that is, the center of $\g$, then $\g$ is generated by $\{X,Y,T\}$.

Let $U$ be a generator of the isotropy $\I$ and consider $\iso(U): \LL/\I\longrightarrow \LL/\I$, with $\LL/\I =(\mathfrak{sl}(2,\R)\oplus\R) / \R U$. We  consider two cases depending on whether $U$ belongs or not to $\mathfrak{sl}(2,\R)$.

\subsection{$U \in \mathfrak{sl}(2,\R)$}

Let us write $U=\alpha X+\beta Y+\gamma Z$ for some $\alpha$, $\beta$, and $\gamma\in\R$. 
Suppose that $\iso(U)$ infinitesimally preserves a Lorentz metric, and consider the type, hyperbolic, nilpotent (of nilpotency 3) or elliptic, of $\iso(U)$ as a Lorentz isometry. (Notice that this is the same as the type of $U$ seen as an element of $\mathfrak{sl}(2,\R)$). In the basis $\{\overline{X}, \overline{Y}, \overline{T}\}$ of $\LL/\I$, $\iso(U)$ is given by the matrix
\begin{equation*}
    \begin{pmatrix}
        -\alpha & -\gamma & 0\\
        -2\beta & \alpha & 0 \\
        0 & 0 & 0
    \end{pmatrix}.
\end{equation*}
Thus, the type of $\iso(U)$ is given by the value of $-\alpha^2-2\beta\gamma$.

\subsubsection{Parabolic type: $\alpha^2+2\beta\gamma=0$}
 We immediately see from the matrix above that $U$ cannot be parabolic, which would correspond to $\iso(U)$ being nilpotent of nilpotency 3.
Indeed, $\iso(U)$ as an infinitesimal Lorentz isometry preserves only one direction, which then must be the line generated by $\overline{T}$. 
Thus, the 2-dimensional factor $\mathfrak{sl}(2,\R)/\I$ is also $\iso(U)$-invariant. However, a nilpotent infinitesimal Lorentz isometry does not preserve a supplementary hyperplane to its invariant direction.

\subsubsection{Elliptic type: $\alpha^2+2\beta\gamma<0$}\label{sec:sl2-aff-ellip}

If $U$ is elliptic,  we have the example of the product $\mathbb (\mathbb{H}^2, ds^2) \times (\R, -dt^2)$, where $(\mathbb{H}^2,ds^2)$ is the Riemannian hyperbolic plane. 
The group $\mathrm{Aff}^+(\R) \times \R$ acts transitively on it, but its full isometry group is $\mathrm{PSL}(2, \R) \times \R$.

For $U$ elliptic, the $\R$-factor of $\g$ is timelike and its orthogonal plane is space-like (see the discussion in Subsec. \ref{subsec:inft-isom}).
Its orthogonal plane is then isometric to $(\mathbb{H}^2,ds^2)$ up to scaling since we obtain the quotient of $\mathrm{SL}(2,\R)$  by a one-parameter elliptic subgroup conjugated to $\mathrm{SO}(2)$.   
Overall, for $U$ elliptic, the given example is the unique Lorentzian example up to isometry and scaling. 

Similar considerations show that  $\mathbb (\mathbb{H}^2, ds^2) \times (\R, dt^2)$ is the unique Riemannian example up to isometry and scaling.

\subsubsection{Hyperbolic type: $\alpha^2+2\beta\gamma>0$}

Suppose, now, that $U$ is a hyperbolic element.  Then the $\R$-factor of $\g$ is an invariant direction which is spacelike (again, see the discussion in Subsec. \ref{subsec:inft-isom}). Hence, its orthogonal plane is also invariant and the isometric action on it is conjugate to $\left(\begin{smallmatrix}\exp(t\lambda)&0\\0&\exp(-t\lambda)\end{smallmatrix}\right)$ for $\lambda=\sqrt{\alpha^2+2\beta\gamma}$. This implies that the orthogonal plane must have signature $(1,1)$.  

The quotient space $L/I$ is, up to cover, $\mathrm{SL}(2, \R)/I\times \R$.  Therefore, we obtain the product $(\mathrm{dS}_2, ds^2) \times (\R, dt^2)$, where $(\mathrm{dS}_2, ds^2)$ is the de Sitter plane.
 Observe, however, that the $\mathrm{Aff}^+(\R)$-action on $\mathrm{dS}_2$ has an open orbit but is not transitive, therefore, $\mathrm{Aff}^+(\R)\times \R$ does not act transitively on $\mathrm{dS_2}\times \R$.  We have here the example of $\mathrm{dS}_2\times \R$ which is a homogeneous space but does not have a transitive action of a 3-dimensional Lie group.

\subsection{$U\notin \mathfrak{sl}(2,\R)$}\label{subsec:U-not-sl2} In this case, $U$ has a component in $T$, that is, $U=\alpha X + \beta Y + \gamma Z + \delta T$. Since $T$ is central, we can assume that $\delta = 1$, by replacing $\delta T$ by $ T$. The matrix of $\iso(U)$, $U=\alpha X+\beta Y+\gamma Z +T$, is given by   
\begin{equation*}
    \begin{pmatrix}
        -\alpha & -\gamma & 0\\
        -2\beta & \alpha & 0 \\
        -1 & 0 & 0
    \end{pmatrix}
\end{equation*}
and the type of isotropy is determined by the quantity $-\alpha^2-2\beta\gamma$. 

Transitivity is not guaranteed so further discussion is needed. Since $U\notin \mathfrak{sl}(2,\R)$ then $U$ is transversal to $\mathfrak{sl}(2,\R)$ and, as shall be proved in Sec. \ref{sec:g-non-solv}, $\iso(U)$ is an inner derivation of $\mathfrak{sl}(2,\R)$. This implies that the $\mathrm{SL}(2,\R)$ acts transitively on $L/I$.  It will be shown, in Sec. \ref{transitivity},  that $\mathrm{Aff}^+(\R)\times\R$ acts transitively on $L/I$ if and only if $U$ is elliptic.

Performing the usual computations, we obtain the following normal forms for left-invariant metrics on $\mathrm{Aff}^+(\R)\times \R$ whose isometry Lie algebra is isomorphic to $\LL$ and the isotropy to $\mathfrak{so}(2)$:
 \begin{equation*}
  m_{r} = \begin{pmatrix}
        1 & 0 & 0\\
        0 & 1 & 1 \\
        0 & 1 & r
    \end{pmatrix}, r>1, r<0;\,  n_{s} = \begin{pmatrix}
        1 & 0 & 0\\
        0 & -1 & 1 \\
        0 & 1 & s
    \end{pmatrix}, -1<s<0;\, q = \begin{pmatrix}  1 & 0 & 0\\
        0 & 0 & 1 \\
        0 & 1 & -1 \end{pmatrix}.
\end{equation*}
We have that $q$ is of constant negative sectional curvature and is complete (in fact, it corresponds to the Killing form of $\mathfrak{sl}(2,\R)$), therefore the dimension of its Killing algebra is 6. For $m_r$ and $n_s$, the Killing algebra is 4-dimensional.

\bigskip

\section{$\g$ non-solvable}\label{sec:g-non-solv}

We consider the case where $\g$ is not solvable with isometry Lie algebra $\LL$ of dimension 4. Then $\g$ is either $\mathfrak{sl}(2,\R)$ or $\mathfrak{so}(3)$ with $\LL$  isomorphic to $\g \oplus \R$.  

We start by showing that the possible embeddings of $\g$ as a subalgebra of $\LL$ are by automorphisms of $\g$. Let $\rho: \g \longrightarrow \LL$ be such an embedding, i.e. a homomorphism of Lie algebras with trivial kernel. Observe that $\rho$ cannot have image in the $\R$-factor since $\g$ is simple. This implies that the image of $\g$ by $
\rho$ is the graph of representation of $\g$ in $\R$. But the only such representation is the trivial one and thus the graph is equal to $\g\oplus \{0\}$. Then the claim follows.   

Let $U\in \LL$ be a generator of the isotropy $\I$. Then $U$ is transverse to $\g$ and, since $\g$ is an ideal in $\LL$, the isotropy action must be by automorphisms, by Cor. \ref{cor:isotropy}. In particular, identifying $\LL/I$ with $\g$ via the projection by transversality, $\iso(U): \g \longrightarrow \g$ is a derivation of $\g$. Since $\g$ is simple, $\iso(U)$ is an inner derivation.

\subsection{Case $\g=\mathfrak{sl}(2,\R)$} Our reference for the present discussion is the Killing form $\kappa$. Up to automorphism and scaling it corresponds to the only bi-invariant metric  on $\mathrm{SL}(2,\R)$. The metric $\kappa$ is, moreover,  of negative constant sectional curvature and is complete (its geodesics run on 1-parameter subgroups). Therefore, $(\mathrm{SL}(2,\R), \kappa)$ is a space form and its full isometry group is $\mathrm{O}(2,2)$, \cite[Ch. 9]{ONeill}. The isometry Lie algebra is $6$-dimensional with $\mathfrak{o}(2,2) = \mathfrak{sl}(2,\R) \oplus \mathfrak{sl}(2,\R)$.  Note that no other metrics of constant sectional curvature (or Einstein, since we are in dimension 3) exist on $\mathrm{SL}(2,\R)$, so $\kappa$ is the only case where the dimension of the Killing algebra is 6. 

Recall that the Lie algebra of inner derivations of $\mathfrak{sl}(2,\R)$ is $\mathfrak{so}(2,1) \simeq \mathfrak{sl}(2,\R)$, therefore, we can have all types of infinitesimal isometries.

\subsubsection{Elliptic type} In this case, the isotropy action has a timelike invariant line and acts as a rotation on its spacelike orthogonal complement. Thus, the associated metric $q$ will be the sum of the Killing form plus a multiple of $e\otimes e$ where $e$ is a timelike vector (of $\kappa$).  More concretely, there exists a basis $\{f_1, f_2, f_3\}$ such that $\iso(U)=\ad_{f_1}$ and  $[f_3,f_1]=-f_2, [f_3,f_2] =f_2$. This basis can be chosen such that $[f_1,f_2]=f_3$. The Killing form is written as $2\kappa$ where
\begin{equation*}
    \kappa =  f^1 \odot f^1 + f^2\odot f^2 - f^3 \odot f^3, 
\end{equation*}
where $\{f^1, f^2, f^3 \}$ is the dual basis and $\odot$ denotes the symmetric tensor product.
The metrics compatible with this infinitesimal isometry are given by the matrix $\mathrm{Diag}(a,a,b)$, for some $a,b \in \R^\ast$. Therefore, up to scaling,
\begin{equation*}
    q_\alpha = \kappa + \alpha (f^3\odot f^3), \quad \alpha \neq 0,1,
\end{equation*}
is the one-parameter family of metrics with 4-dimensional isometry Lie algebra of elliptic type. Notice that  Riemannian metrics are included in the analysis.

Converting to our standard basis of Table \ref{Table:LA}, by means of the transformation $e_1=f_1, e_2=\tfrac{f_2+f_3}{\sqrt{2}}, e_3=\tfrac{f_2-f_3}{\sqrt{2}}$, we have
\begin{equation*}
    q_\alpha= e^1 \odot e^1 + 2 e^2 \odot e^3 + \tfrac{\alpha}{2} (e^2 - e^3) \odot (e^2 - e^3), \quad \alpha \neq 0,1.
\end{equation*}

\subsubsection{Hyperbolic type} In this case, the isotropy action has an invariant spacelike direction, and its orthogonal plane is then timelike and intersects the null cone in two lines corresponding to the two non-zero eigenvectors of $\iso(U)$. Thus, the associated metric $q$ will be the sum of the Killing form plus a multiple of $e\otimes e$ where $e$ is a spacelike vector (of $\kappa$).  More concretely, there exists a basis $\{e_1,e_2,e_3\}$ such that $\iso(U)=\ad_{e_1}$ and $[e_1,e_2]=e_2$, $[e_1, e_3]=-e_3$. This basis can be chosen such that $[e_2,e_3]=e_1$. (Notice that the basis here is already standard). The Killing form is written as $2\kappa$ where
$$\kappa =  e^1 \odot e^1 + 2 e^2 \odot e^3.$$ 
The metrics compatible with this infinitesimal isometry are given by the matrix 
\begin{equation*}
    \begin{pmatrix}
        a & 0 & 0 \\
        0 & 0 &  b \\
        0 & b & 0
    \end{pmatrix},
\end{equation*}
for some $a,b \in \R^\ast$. Then 
\begin{equation*}
    q_\beta = \kappa + \beta (e^1 \odot e^1), \quad \beta \neq 0,-1,
\end{equation*}
as the one-parameter family of metrics, up to and scaling, with 4-dimensional isometry Lie algebra of hyperbolic type. 

\subsubsection{Nilpotent type} In this case, we have a unipotent automorphism $A$ having an invariant null line, say, the line generated by $f_1$ and preserving its orthogonal plane spanned by $\{f_1,f_2\}$. Hence, $A$ preserves forms admitting this plane as a kernel. In particular $A$ preserves the dual form $f^3$. It follows that $A$ preserves any quadratic form which is the sum of the Killing form and a multiple of $f^3\otimes f^3$.    More concretely, there exists a basis $\{f_1, f_2, f_3\}$ such that $\iso(U)=\ad_{f_1}$ and  $[f_1,f_2]=f_1, [f_1,f_3] = f_2$. This basis can be chosen such that $[f_2,f_3]=f_3$. The Killing form is written as $2\kappa$ where
\begin{equation*}
    \kappa =  f^2 \odot f^2 - 2 f^1\odot f^3. 
\end{equation*}
The metrics compatible with this infinitesimal isometry are given by the matrix 
\begin{equation*}
    \begin{pmatrix}
        0 & 0 & -a \\
        0 & a &  0 \\
        -a & 0 & b
    \end{pmatrix},
\end{equation*}
for some $a \in \R^\ast, b\in \R$. Then 
\begin{equation*}
    q_\gamma = \kappa + \gamma (f^3 \odot f^3), \quad \gamma \neq 0,
\end{equation*}
is the one-parameter family of metrics, up to scaling, with 4-dimensional isometry Lie algebra of hyperbolic type. 

To convert to our standard basis of Table \ref{Table:LA}, we only have to take $e_1=f_2, e_2=f_3, e_3=-f_1$, and we obtain
$$q_\gamma = e^1\odot e^1 +2 e^2\odot e^3 + \gamma (e^2 \odot e^2), \gamma \neq 0.$$

\subsubsection{Normal forms} As can be seen from Appendix \ref{Ap:normal-forms}, the metrics $q_\alpha$, of elliptic isotropy, are already in normal form. As for the metrics $q_\beta$, of hyperbolic isotropy, we rescale them as $q_{\tilde{\beta}} = e^1 \odot e^1 + 2\tilde{\beta} (e^2\odot e^3)$, $\tilde{\beta}\neq 0,1$, and the $q_{\tilde{\beta}}$ are also in normal form. The metrics $q_\gamma$, of nilpotent isotropy, are not, however in normal form. Let $\gamma >0$, and consider the transformation (in the standard basis) given by
\begin{equation*}
    Q= \begin{pmatrix}
        1 & 0 & 0 \\
        0 & \frac{1}{\sqrt{\gamma}} & 0 \\
        0 & 0 & \sqrt{\gamma}
    \end{pmatrix}.
\end{equation*}
Clearly $Q \in \mathrm{Aut}(\mathfrak{sl}(2,\R))$ and it transforms $q_\gamma$, with $\gamma>0$, into $q_1$. Similarly, if $\gamma <0$,  $q_\gamma$ is seen to be equivalent to $\gamma_{-1}$.

\subsection{Case $\g=\mathfrak{so}(3)$} The isotropy representation is identified with the usual representation of $\SO(3)$ on $\R^3$, in particular, all one-parameter subgroups are conjugated. Thus, the isotropy Lie algebra consists only of elliptic infinitesimal isometries.
We consider the standard basis $\{e_1,e_2,e_3\}$ of $\mathfrak{so}(3)$  with commutator relations $$[e_1,e_2]=e_3,\quad [e_2,e_3]=e_1,\quad [e_3,e_1]=e_2.$$
The Killing form is negative definite and given by $\kappa = -2 (e^1\odot e^1 + e^2\odot e^2 + e^3 \odot e^3)$.

The $\SO(2)$ action fixes a line we can assume is $e_3$ and acts by rotation on its orthogonal plane $\{e_1,e_2\}$.
This gives, up to scaling, a one-parameter family of inner products by the line generated by $\{e_3\}$ (the timelike direction in the Lorentzian case) and the plane $\{e_1,e_2\}$ spacelike.
Namely, we get
$$
q_\delta=(e^1\odot e^1)+(e^2\odot e^2)+\delta(e^3\odot e^3), \ \delta \neq 0,1.
$$

Another way to characterize this family of metrics, is to observe that any inner product is given by means of the Killing form $\kappa$ and a $\kappa$-symmetric endomorphism $B$ as  $\kappa(B\cdot,\cdot)$. Then $B$ is diagonalizable and the inner product has a one-dimensional isotropy if and only if $B$ has two equal eigenvalues, which can be made equal to 1 after rescaling. The isotropy has exactly dimension one since the only metrics with constant sectional curvature are multiples of the Killing form.

\bigskip

\section{Transversality vs Transitivity} \label{transitivity}

 We consider the $G$-action on $L/I$, where $\g$ is transversal to $\I$, that is,  $\g\oplus\I=\LL$. 
 This implies that the $G$-orbit of $1 I \in L/I$ is open, that is $G\cdot I$ (defined as the set of products $g i, g \in G, i \in I$) is open in $L$.
 As for the transitivity of the $G$-action on $L/I$, it means that $G\cdot I = L$. It is worth observing that $G$ acts transitively on $L/I$ is equivalent to that $I$ acts transitively on $L/G$ (as both properties are equivalent to $L = G\cdot I$).

We saw in Sec. \ref{sec:g-solv-L-non-solv} that in general transversality does not imply transitivity:  $\mathrm{Aff}^+(\R) \times \R$ has an open orbit when acting on $\mathrm{dS}_2 \times \R$ but does not act transitively.
 
There is a simpler, in fact, solvable example in dimension 2:  $\mathrm{Aff}^+(\R)$ acts on the Minkowski plane $\R^{1,1}$ (of dimension $1 + 1$) with the upper and lower half-planes as open orbits and the real axis as a singular orbit. 
 
More generally, in order to see how an orbit $G\cdot xI$ becomes singular, left translate it by $x^{-1}$ to get $\Ad_x(G)\cdot I$ whose tangent space is $\Ad_x(\g) + \I$. 
Hence, it has dimension 3, and $Gx$ is open if $\I \not \subset  \Ad_x (\g)$, otherwise, $\I \subset \Ad_x \g$, and $Gx$ has dimension 2. 

\medskip
 
Consider  $X = L/G$, which has dimension 1,   endowed with the $L$-left action.  We would like to know when a given one-parameter group of $ L$ acts transitively on $X$  and now proceed to answering this question.

\begin{proposition}  \label{tansitivity1} Let $N$ be a  connected nilpotent Lie group acting  analytically on  a one dimensional manifold $X$. If there is no common fixed point of $N$, then 
the $N$-action preserves a Riemannian (flat) metric, unique up to a constant, so that $N$ acts by translation, say,  $X$ is identified with one of the abelian groups $\R$ or $\SO(2)$ and $N$ acts via a homomorphism of this group. 
\end{proposition}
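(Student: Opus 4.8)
The plan is to pass to the infinitesimal action and use nilpotency to force the image Lie algebra to be one-dimensional. Differentiating the $N$-action produces a Lie algebra homomorphism $\n \longrightarrow \mathfrak{X}(X)$ into the analytic vector fields on $X$; write $\mathfrak{h}$ for its image, which is a quotient of $\n$ and hence a nilpotent Lie algebra of analytic vector fields. Because $N$ is connected, the action is determined by $\mathfrak{h}$, and a point of $X$ is a common fixed point of $N$ precisely when every element of $\mathfrak{h}$ vanishes there. In particular $\mathfrak{h} \neq 0$, for otherwise the action would be trivial and every point would be fixed.

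The key lemma I would establish is that two commuting analytic vector fields on a connected one-dimensional manifold are linearly dependent over $\R$. In a local coordinate $x$, writing the fields as $f\partial_x$ and $g\partial_x$, their bracket is $(fg'-gf')\partial_x$, so commuting means the Wronskian $fg'-gf'$ vanishes identically; for real-analytic $f$ and $g$ on a connected domain this forces $f$ and $g$ to be proportional, since $(g/f)'=0$ wherever $f\neq 0$ and analytic continuation propagates a single constant of proportionality across the whole of $X$. This is the step where analyticity is essential, as the conclusion fails in the merely smooth category, and controlling the proportionality through the zeros of $f$ and around a circle is the main technical point I expect to have to argue with care.

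Granting the lemma, I would pick a nonzero central element $Z$ of the nilpotent algebra $\mathfrak{h}$, which exists because nonzero nilpotent Lie algebras have nontrivial center. Since $Z$ commutes with every $V\in\mathfrak{h}$, the lemma makes each such $V$ proportional to $Z$ as a vector field, whence $\mathfrak{h}=\R Z$ is one-dimensional. The hypothesis that $N$ has no common fixed point then says exactly that $Z$ has no zeros, so in a local coordinate $Z=f\partial_x$ with $f$ nowhere vanishing.

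Finally, a nowhere-vanishing field makes the invariant metric explicit. Requiring $L_Z(\phi\,dx^2)=0$ amounts to the ordinary differential equation $f\phi'+2f'\phi=0$, whose solutions are $\phi=C/f^2$; thus $g=dx^2/f^2$ is a $Z$-invariant, hence $N$-invariant, Riemannian metric, unique up to the positive constant $C$, and it is flat because every one-dimensional metric has vanishing curvature. Passing to the arc-length coordinate $s$ of $g$ turns $Z$ into the unit field $\partial_s$ and its flow into the translations $s\mapsto s+t$. This identifies $X$ isometrically with $\R$ (translations) or, when $X$ is compact, with $\SO(2)=\R/\ell\Z$ carrying its rotation-invariant metric, and it exhibits the $N$-action as a homomorphism of $N$ into the translation group of this abelian model, completing the argument.
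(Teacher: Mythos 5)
Your proof is correct, but it runs along a genuinely different track from the paper's. The paper argues entirely at the group level: in the abelian case it splits into (i) some one-parameter subgroup acts freely, whence all orbits are open and $X$ is a single orbit identified with $\R$ (or $\SO(2)$) on which the rest of $N$ must act by translations, and (ii) some $n_0$ has a proper nonempty fixed-point set, which is discrete by analyticity, preserved by $N$ by commutativity, hence pointwise fixed by connectedness --- contradicting the no-common-fixed-point hypothesis; the general nilpotent case is then handled by induction on the nilpotency degree via the center. You instead linearize: your key lemma is that two commuting analytic vector fields on a connected $1$-manifold are proportional (Wronskian vanishes, so $g/f$ is locally constant off the zeros of $f$, and analytic continuation on the connected manifold propagates a single constant), so a central element $Z$ of the image algebra $\mathfrak{h}$ forces $\mathfrak{h}=\R Z$, and the no-fixed-point hypothesis makes $Z$ nowhere vanishing. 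Both arguments use analyticity at exactly one pinch point (discreteness of $\mathrm{Fix}(n_0)$ for the paper, global analytic continuation for you). What your route buys is an explicit construction of the invariant metric $dx^2/f^2$ and of its uniqueness up to scale --- a part of the statement the paper's proof leaves implicit in the identification with $\R$ or $\SO(2)$ --- and it avoids the induction on nilpotency degree, since centrality is used only once at the algebra level. The one point worth making explicit in your final step is that the arc-length coordinate is onto $\R$ in the noncompact case; this follows because $Z$, coming from a global group action, is complete, and in arc length its flow is $s\mapsto s+t$ for all $t$.
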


\begin{proof} Assume first that $N$ is abelian.  Assume there exists a one-parameter subgroup  $J$ acting freely. This means all orbits are open, and hence by connectedness, there exists 
exactly one orbit. Thus $X$ is identified with $J \sim \R$ acting by translation on itself. But any diffeomorphism of $\R$ commuting with translations is a  translation. Now,  let $n_0 \in N$ having a non-trivial fixed point set: $X \neq Fix(n_0) \neq \emptyset$.  By analyticity, this is a discrete set. By commutativity, $N$ preserves $Fix(n_0)$, and 
by connectedness of $N$, it acts trivially on it, that is, $Fix(n_0) = Fix(N)$, contradicting our hypothesis.

In the general nilpotent case, one takes $J$ and $n_0$ in the center $C$  of $N$. This can be done, unless $C$ acts trivially on $X$. In this last case, one considers the action of 
$N/C$, and apply to it the same process (one argues by induction on the nilpotency degree of $N$).

\end{proof}

Let us first focus on the case where $X = \R$.

\begin{proposition}  \label{transitivity2}
If $L$ has a normal nilpotent subgroup $N$  not contained in $G$, then, 

\begin{enumerate} 

\item 
 $X$ is identified with $\R$, and the $L$-action on $X$  factorizes via a homomorphism
$\rho: L \to  \mathrm{Aff}^+(\R)$.

\item 

Let  $Tr \subset L$ be the inverse image by $\rho$ of the translation group of $\mathrm{Aff}^+(\R)$ (a normal subgroup).  Let also
$G_0$ be the maximal normal subgroup contained in $G$ (This is the subgroup of $G$ acting trivially on 
$L/G$, equivalently, $G = \cap_{l \in L} \Ad_l(G)$). Then

- $Tr = G_0. N$. 

 - At the Lie algebra level, $\mathfrak{Tr} =  \mathfrak{g}_0 + \mathfrak{n}$, where $\g_0 = \{u \in \g / \ad( \LL ) \subset \g \}$.

\item[] A one-parameter group $J \subset L$  acts  transitively or trivially on $X$  
if and only if $J \subset Tr$.

\item

A  one-parameter group $J$  whose $\Ad$-action is parabolic or elliptic  acts trivially or transitively on $X$.   

\end{enumerate}

\end{proposition}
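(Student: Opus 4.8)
The plan is to analyse the $N$-action on $X=L/G$ first, promote it to an $L$-action by \emph{affine} maps, and then detect transitivity of a one-parameter subgroup from the ``dilation'' part of that affine action. Throughout I write $\n=\mathrm{Lie}(N)$.

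\textbf{Item (1).} First I would check that $N$ has no common fixed point on $X$. The set $\mathrm{Fix}(N)$ is $L$-invariant because $N$ is normal, and since $L$ acts transitively on $X=L/G$ it is either empty or all of $X$; the latter would force $N\subset G_0\subset G$, contradicting $N\not\subset G$. Proposition \ref{tansitivity1} then applies: $N$ preserves a flat metric $\mu$, unique up to a positive constant, $X$ is $\R$ or $\SO(2)$, and $N$ acts by translations. As $N$ is normal, every $l\in L$ carries $\mu$ to another $N$-invariant metric, so by uniqueness $l_*\mu=c(l)\mu$ for a homomorphism $c\colon L\to\R_{>0}$; hence $L$ preserves the associated flat affine structure and acts affinely. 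To exclude the circle I would use that on $\SO(2)$ every connection-preserving diffeomorphism is an isometry, so the stabilizer of a point is finite; since $G$ is the stabilizer of $1I\in X$, this would give $\rho_*(\g)=0$, i.e. $\g$ an ideal and $G$ normal. This is incompatible with the configurations in which transitivity can actually fail, where $\g$ is never an ideal. Thus $X\cong\R$ and, on the identity component, the affine action yields $\rho\colon L\to\mathrm{Aff}^+(\R)$.

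\textbf{Item (2).} Here $Tr=\rho^{-1}(\text{translations})$ is exactly $\ker(c)$, the subgroup acting by $\mu$-isometries, and it is normal because translations are normal in $\mathrm{Aff}^+(\R)$. The inclusions $G_0\subseteq Tr$ (it acts trivially) and $N\subseteq Tr$ (it acts by translations) are immediate, so $G_0N\subseteq Tr$. For equality I would argue on dimensions: at the Lie-algebra level $\mathfrak{Tr}=\ker(dc)$ is a codimension-one ideal containing $[\LL,\LL]$, while $\g_0=\mathrm{Lie}(G_0)=\ker\rho_*$ is the largest ideal of $\LL$ contained in $\g$; since $\n\not\subseteq\g_0$ and $\mathfrak{Tr}/\g_0$ is one-dimensional, $\g_0\subsetneq\g_0+\n\subseteq\mathfrak{Tr}$ forces $\mathfrak{Tr}=\g_0+\n$, hence $Tr=G_0N$. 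The bracketed equivalence is then geometry in $\mathrm{Aff}^+(\R)$: a one-parameter image $\rho(J)$ is trivial, a nontrivial translation group (free and transitive on $\R$), or has nonzero dilation part and therefore possesses a fixed point together with nontrivial motion. So $J$ acts transitively or trivially if and only if $\rho(J)$ lies in the translations, i.e. if and only if $J\subset Tr$.

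\textbf{Item (3).} By that equivalence it suffices to show the multiplier $\nu(Z)=dc(Z)$ vanishes when $\ad_Z$ is parabolic or elliptic, $Z$ generating $J$. The key step is that conjugating an $N$-translation by the affine map $\rho(\exp tZ)$ rescales it by the dilation factor; differentiating at $t=0$ gives $\rho_*(\ad_Z W)=\nu(Z)\,\rho_*(W)$ for every $W\in\n$. Hence $\nu(Z)$ appears as an eigenvalue of the map induced by $\ad_Z$ on $\n/(\n\cap\g_0)$, and therefore as a \emph{real} eigenvalue of $\ad_Z$ on $\LL$. If $\ad_Z$ is elliptic its only real eigenvalue is $0$, and if it is parabolic all eigenvalues vanish; either way $\nu(Z)=0$, so $J\subset Tr$ and $J$ acts trivially or transitively. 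I expect the two delicate points to be the clean exclusion of $\SO(2)$ in item (1)—this is where the global topology enters and where one must invoke that $G$ is not normal (equivalently $\g$ is not an ideal) in the relevant configurations—and the identity $\rho_*(\ad_Z W)=\nu(Z)\rho_*(W)$ in item (3), which is precisely what converts the \emph{type} of the adjoint action into the \emph{transitivity} of $J$; the remainder is dimension bookkeeping and the two-dimensional geometry of $\mathrm{Aff}^+(\R)$.
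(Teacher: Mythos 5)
Your proof follows essentially the same route as the paper's: normality of $N$ together with Proposition \ref{tansitivity1} forces an affine action on $X\cong\R$, item (2) reduces to bookkeeping with the ideal $\g_0$ and the one-dimensional quotient $\mathfrak{Tr}/\g_0$, and item (3) detects a nonzero real eigenvalue of the adjoint action coming from the dilation part, which rules out parabolic and elliptic type. The only soft spot is your exclusion of the circle case, which leans on the eventual classification (``$\g$ is never an ideal in the configurations where transitivity can fail'') and is therefore somewhat circular; the paper instead handles $X\cong\SO(2)$ in a separate remark, where every element acts as a rotation and the transitive-or-trivial dichotomy holds for free, which is the cleaner resolution.
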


\begin{proof}  $\phantom{a}$

\begin{enumerate}

\item 

$N$ acts by translation on $\R$.  Since $N$ is normal in $L$, any element $l \in L$ acts by sending a translation to a translation.  It follows that 
$l$ acts affinely.

\item 

Straightforward. To see that $\g_0$ is the Lie algebra of $G_0$, observe that $g = \exp u$, acts trivially on $L/G$, exactly, 
its derivative on the tangent space $\Lb/\g$ at the base point $1.G \in L/G$  is trivial. This is equivalent to $\ad_u(\Lb) \subset \g$, that is
$u \in \g_0$.

\item 
Now an affine transformation of $\R$ is either a translation or has a fixed point where it has a non-trivial derivative, say $\lambda >1$, or $0< \lambda <1$. This implies, for the element 
$l \in L$, that its $\Ad$-action has $\lambda$ as eigenvalue, and so cannot be parabolic or elliptic.
\end{enumerate}
\end{proof}

\begin{remark} In the other case of Proposition \ref{tansitivity1}, 
 the 1-dimensional manifold is the circle $\SO(2)$.  Proposition \ref{transitivity2} applies more easily in this case,
however, no affine transformations exist in this instance.
\end{remark}

\subsection{Applications} There are two easy cases where transitivity is guaranteed.

\subsubsection{Case where $L$ is a Heisenberg extension} From Sec. \ref{sec:N=heis}. Proposition  \ref{transitivity2} applies.  If $\g \neq \heis$, we have that the isotropy $I$ is nilpotent, and so we get transitivity.  If $\g = \heis$, then $\g$ is an ideal in {$\LL$}.

\subsubsection{Case $\NN= [\Lb,\Lb]$ is $\R^2$ and $[\LL, \LL] \subset \g$} From Subsec. \ref{subsec:N=R2-ideal}. In this case, $\g$ is an ideal in $\LL$.  

\medskip 

We will now consider the less straightforward cases.

\subsubsection{Case $\NN = [\Lb,\Lb]$ is $\R^2$ and $[\LL, \LL] \not\subset \g$}  From Subsec. \ref{subsec:N-not-g}.  We will see  here that    transitivity holds 
only in the case of nilpotent  isotropy.  So all the other potential  cases  do not have a 4-dimensional isometry group. Here $N$ is $[L, L]$, with Lie algebra $\NN = [\Lb, \Lb] = \R Y \oplus \R T$. The ideal $\g_0$ consists of $\R Y$ together with 
$\R W$ where  $W \in \R X \oplus \R Z$, satisfies  that  $\ad_W(\R Y \oplus \R T) \subset \g$, equivalently, $[W, T]\in \g$. It follows that  $[W, T] \in \R Y$. 
If  transitivity holds, then $\mathfrak{I} $ must be contained in $\mathfrak{Tr} = \R W \oplus \NN$. 

\medskip

- Hyperbolic and elliptic cases.   Let $U = W + V$, $V \in \NN$. If $\ad_U$ induces a hyperbolic infinitesimal isometry on $\Lb/ \g$, then it will has a spectrum
$\{\alpha, - \alpha \neq 0, 0\}$. Since $\ad_U$ sends all $\Lb$ on $\NN$, it cannot have a non-zero eigenvector outside 
$\NN$. Thus, it must be hyperbolic on $\NN$. There, the action of $\ad_U$  equals that of $\ad_W$, which can not be hyperbolic, since 
$\ad_W(T) \in \R Y$.  The case of elliptic isotropy is similar, but it was also ruled out directly in Subsec. \ref{subsec:N-not-g}. 

\medskip

- Nilpotent case. Now $\ad_U$ or equivalently $\ad_W$, acting on $\NN$ must be nilpotent. Conversely, if $P \in \R X \oplus \R Z$, is such that  $\ad_P$ is nilpotent on $\NN$, then like $W$, $\ad_P$  sends
$\NN$ to $\R Y$. Indeed, $Y$ is a common eigenvector for $X$ and $Z$, and hence for $P$ too, and 
by nilpotency, $\ad_P Y = 0$. Again by nilpotency, $\ad_P T \in \R Y$. It now suffices to be sure that $\ad_U $ acting on $\Lb/ \I$ has nilpotency 3 in order to preserve infinitesimally some Lorentz metric, see Subsec. \ref{subsec:N-not-g}

\medskip

\subsubsection{$\g$ solvable and $\Lb$  not solvable} Here  $L= \widetilde{\mathrm{SL}(2, \R)} \times \R$ and $G =  \mathrm{Aff}^+(\R) \times \R$.

To begin with, let us instead  assume that $L= {\mathrm{SL}(2, \R)} \times \R$.
 The isotropy $I$ is not contained in the $\R$-factor, so it has a non-trivial projection $J$, a one-parameter group
of $\mathrm{SL}(2, \R)$. Since $\R \subset G$, we have $G. I = G. J = (\mathrm{Aff}^+(\R). J)\times \R$,  and thus the transitivity question is reduced to that
of $\mathrm{Aff}^+(\R)$ on $\mathrm{SL}(2, \R)/ J$, or equivalently the transitivity of $J$ on  $\mathrm{SL} (2, \R)/ \mathrm{Aff}^+(\R)$.  The last is the circle, endowed with the $\mathrm{SL}(2, \R)$-action by homography. Any hyperbolic or parabolic one-parameter group
acting by homography on the circle has (at least) a fixed point, and thus does not act transitively. In contrast, any elliptic one-parameter group has no fixed point and so has exactly one orbit.

Let us now come back to the general case $L= \widetilde{\mathrm{SL}(2, \R)} \times \R$.  Let $ p:   \widetilde{\mathrm{SL}(2, \R)} \to  {\mathrm{SL}(2, \R)}  $ be the projection, and
$\pi_1 \subset  \widetilde{\mathrm{SL}(2, \R)} $,   the fundamental group of $L=  {\mathrm{SL}(2, \R)}$.
It is cyclic and central.  Let $H$ be a connected subgroup of $L=  {\mathrm{SL}(2, \R)}$. In the case where $H$ is a parabolic of hyperbolic one-parameter group of $\mathrm{Aff}^+(\R)$,
the inverse image $p^{-1}(H)$ consists of countably many disjoint copies of $H$ embedded in $L= \widetilde{\mathrm{SL}(2, \R)} $. In other words,
the restricted  projection $p: p^{-1}(H) \to H$ is trivial.  In contrast, if $H$ is an elliptic one-parameter group, $p^{-1}(H)$ is connected,  contains $\pi_1$,  and is the universal cover
of $H \cong \SO(2)$.   These  details give us a full description of one-parameter subgroups of $  \widetilde{\mathrm{SL}(2, \R)}  $ and allows us to conclude
that transitivity holds only in the elliptic case.  Furthermore, $\pi_1$ is contained in the isotropy, and because it is central, it acts trivially on  the homogeneous space
$L/I$. In other words, there is  finally no need to consider the universal cover $\widetilde{\mathrm{SL}(2, \R)}$. 

\bigskip

\section{Further remarks}

 \subsection{Reductive complements}\label{Subsec:FR-red-comp} We recover the example of Sec. \ref{sec:sl2-aff-ellip}. Here, we have $\LL=\mathfrak{sl}(2,\R) \oplus \R$ with basis $\{X,Y,Z,T\}$ and $\g=\mathfrak{aff}(\R)\oplus \R$ with basis $\{X,Y,T\}$. We take as isotropy generator the element $U=-\tfrac{1}{2}Y-Z$. This corresponds in the notation of Sec. \ref{sec:sl2-aff-ellip} to having $\alpha =0$, $\beta=-\tfrac{1}{2}$ and $\gamma=-1$. So, indeed, we get elliptic isotropy and the metric is $m=\mathrm{Diag}(1,1,-1)$.

Now, $\LL=\mathrm{span}\{X,Y,T,U\}$. It can be computed that the reductive complements of $\I=\R U$ are given by the one-parameter family 
\begin{equation*}
    r_\alpha = \mathrm{span}\{X, Y+U, T +\alpha U\}, \, \, \alpha\in\R.
\end{equation*}
As can be seen, none of the $r_\alpha$ is a subalgebra since $[X,Y+U]=-U$ and $U\notin r_\alpha.$
Recall, as mentioned in Sec. \ref{subsec:isom-aut}, that it was proved in \cite{FelsRenner}, that when the isotropy is connected all semi-Riemannian homogeneous spaces are reductive. We have an example here of a transitive action of $\mathrm{Aff}^+(\R)\times \R$ on $L/I$, seen as the product $(\mathbb{H}^2, ds^2)\times (\R, -dt^2)$, such that none of the reductive complements of $\I$ is a subalgebra.  

\subsection{Sectional curvature and completeness}\label{Subsec:FR-sec-curv} 
Any semi-Riemannian manifold $(M,g)$ of dimension $n$ will have isometry group of maximal dimension $n(n+1)/2$ if and only if $g$ is complete and the sectional curvature is constant. In the setting under discussion in this article, of 3-dimensional metric Lie groups, it could be conjectured that dropping one of the two conditions,  completeness or constant sectional curvature, but keeping the other would be equivalent to a 4-dimensional Killing algebra. This is not the case.

Regarding completeness, recall that all metrics on $\mathfrak{euc}(2)$ are complete, \cite{BrombergMedina}, but none have 4-dimensional Killing algebra, cf. Table \ref{Table:Isot-1dim}. 
Concerning curvature, we encountered two metrics on $\mathfrak{aff}(\R)\oplus\R$ which have negative constant sectional curvature. One of them has 6-dimensional Killing algebra (see Sec. \ref{subsec:U-not-sl2}) and the other one has Killing algebra of dimension 4 (see Subsec. \ref{subsec:N=R2-ideal}). There is a third (non-equivalent, cf. Appendix \ref{Ap:normal-forms}) metric $p$ on $\mathfrak{aff}(\R)\oplus \R$, given in that standard basis  $\{X,Y,Z\}$ such that $[X,Y]=Y$, by
\begin{equation*}
  p(X,X) =1, \, p(Y,Z) =1, \, p(Z,Z) =1, 
\end{equation*}
which has constant negative sectional curvature and is incomplete.
From our discussion, this metric has Killing algebra of dimension 3 (since it does not appear in Table \ref{Table:Isot-1dim}).

The converse is, perhaps, of more interest. We now consider $\mathfrak{sol}$, with the standard basis $\{X,Y,Z\}$ such that $[X,Y]=Y, [X,Z]=-Z$, and the metric $q$ given by 
\begin{equation*}
    q(X,Z) =1, \, q(Y,Y)=1,
\end{equation*}
which is incomplete, \cite{BrombergMedina}, and not of constant sectional curvature. In Sec. \ref{sec:N=heis}, this metric was seen to be one of the hyperbolic plane waves of \cite[Th. 1.4]{AlloutBelkacemZeghib} and was proved to have 4-dimensional Killing algebra. The latter means that the Lie algebra of complete Killing fields is 4-dimensional. Moreover, there cannot exist other incomplete Killing fields as this would imply constant sectional curvature. This is an example of an incomplete metric whose Killing fields are all complete.

\appendix

\section{}\label{Ap:normal-forms}
\subsection{Normal forms of metric Lie algebras}

We present here the list of equivalence classes of metrics for each 3-dimensional Lie algebra $\g$ under the action of the automorphism group $\mathrm{Aut}(\g)$ and up to scaling, \cite{BoucettaChakkar-m, HaLee23}.

{\small
\begin{table}[H]
\hspace*{-7mm}	
\begin{tabular}{lll} \toprule
Lie algebra & Non-vanishing brackets \qquad& Metric normal forms \\ \toprule 
$\begin{array}{c} \mathbb{R}^3 \end{array}$ &\begin{tabular}{l} -- \end{tabular}&  $\begin{array}{c} (e^1)^2+(e^2)^2+\varepsilon(e^3)^2, \,\, \varepsilon =\pm 1 \end{array}$  \\ \midrule
$\begin{array}{c} \mathfrak{so}(3) \end{array}$ & \begin{tabular}{l}
     $[e_1, e_2] = e_3$  \\
     $[e_2, e_3] = e_1$ \\
      $ [e_3, e_1] = e_2$
\end{tabular} &  $\begin{array}{c} (e^1)^2+\alpha_1 (e^2)^2+\alpha_2 (e^3)^2, \,\, \alpha_1, \alpha_2 \neq 0 \end{array}$\\ \midrule
$\begin{array}{c} \mathfrak{sl}(2,\R) \end{array}$ & \begin{tabular}{l}
     $[e_1, e_2] = e_2$  \\
       $[e_3, e_1] = e_3$ \\
       $[e_2, e_3] = e_1$
\end{tabular} & $ \begin{array}{l}
(e^1)^2+\tfrac{1}{2}(\alpha_1+\alpha_2) ((e^2)^2+(e^3)^2)+ (\alpha_1-\alpha_2) (e^2e^3), \,\,  \alpha_1 \geq 1, \alpha_2 \neq 0 \smallskip\\
(e^1)^2+\tfrac{1}{2}(\alpha_1+\alpha_2) ((e^2)^2+(e^3)^2)+ (\alpha_1-\alpha_2) (e^2e^3), \,\,  \alpha_1, \alpha_2<0 \\
(e^1)^2+\alpha_1 ((e^2)^2-(e^3)^2)+ 2\alpha_2 (e^2e^3), \,\, \alpha_1 > 0 \\
(e^1)^2+2\alpha (e^2e^3) + \varepsilon (e^2)^2, \, \, \alpha \neq 0, \varepsilon = \pm 1   \\
(e^1)^2 + 2\sqrt{2} (e^1e^3) + 2 (e^2e^3)
\end{array}$ \\ \midrule 
$\begin{array}{c}
  \mathfrak{heis}  
     \end{array}$ & $\begin{array}{c}
          [e_1, e_2] = e_3 
     \end{array}$ & $\begin{array}{l} 
     (e^1)^2+(e^2)^2+\varepsilon (e^3)^2, \,\, \varepsilon =\pm 1\\
     (e^1)^2-(e^2)^2+(e^3)^2\\
     (e^1)^2+2(e^2e^3)
     \end{array}$ \\ \midrule
$\begin{array}{c}
    \mathfrak{sol}
\end{array}$ &  \begin{tabular}{l} $[e_1,e_2]=e_2$\\ $[e_3, e_1] =e_3$ \end{tabular} & $\begin{array}{l} 
     (e^1)^2+(e^2)^2+2(e^2e^3)+\alpha_1(e^3)^2, \,\, \alpha_1 \neq 1\\
     (e^1)^2-(e^2)^2+2(e^2e^3)+\alpha_2(e^3)^2, \,\, \alpha_2 \neq -1, \alpha_2 \leq 0\\
        (e^1)^2+2(e^2e^3)\\
       (e^1)^2+(e^2)^2+\varepsilon (e^3)^2,\,\, \varepsilon = \pm 1 \\
         (e^3)^2+(e^2)^2-(e^1)^2\\
        (e^2)^2+(e^3)^2+2(e^1e^3)+2(e^2e^3)\\
         (e^2)^2+2(e^1e^3)\\
          \end{array}$   \\ \midrule
$\begin{array}{l}
    \mathfrak{h}(1)
\end{array}$ &  \begin{tabular}{l} $[e_1,e_2]=e_2$\\ $[e_1, e_3] = e_3$ \end{tabular}& 
$\begin{array}{l} 
     (e^1)^2+(e^2)^2+\varepsilon(e^3)^2, \,\, \varepsilon = \pm 1\\
      (e^3)^2+(e^2)^2-(e^1)^2\\
     (e^3)^2+2(e^1e^2) \\
         \end{array}$ 
\\ \midrule
$\begin{array}{l}
    \mathfrak{h}(\lambda), \, |\lambda| <  1 \qquad
\end{array}$ &  \begin{tabular}{l} $[e_1,e_2]=e_2$\\ $[e_1, e_3] = \lambda e_3$ \end{tabular}& $\begin{array}{l} 
     (e^1)^2+(e^2)^2+2(e^2e^3)+\alpha_1(e^3)^2, \,\, \alpha_1 \neq 1\\
     (e^1)^2-(e^2)^2+2(e^2e^3)+\alpha_2(e^3)^2, \,\, \alpha_2 \neq -1\\
      (e^1)^2+\varepsilon (e^3)^2+2(e^2e^3), \,\, \varepsilon = \pm 1\\
        (e^1)^2+2(e^2e^3)\\
       (e^1)^2+(e^2)^2+\varepsilon (e^3)^2,\,\, \varepsilon = \pm 1 \\
       (e^1)^2-(e^2)^2+(e^3)^2 \\
        (e^3)^2+(e^2)^2-(e^1)^2\\
        (e^2)^2+(e^3)^2+2(e^1e^3)+2(e^2e^3)\\
         (e^2)^2+2(e^1e^3)\\
          (e^3)^2+2(e^1e^2)\\
     \end{array}$  \\
 \midrule
$\begin{array}{c}
     \mathfrak{psh}
\end{array}$ & \begin{tabular}{l}
     $[e_1,e_2] = e_2$  \\
     $[e_1, e_3] = e_2+ e_3 $
\end{tabular}& 
$\begin{array}{l} 
     (e^1)^2+(e^2)^2+\alpha_1(e^3)^2, \,\, \alpha_1 \neq 0\\
     (e^1)^2-(e^2)^2+\alpha_2(e^3)^2, \,\, \alpha_2 \neq 0\\    (e^1)^2+2\varepsilon(e^2e^3), \,\, \varepsilon = \pm 1\\
       (e^2)^2+2(e^1e^3) \\
        (e^3)^2+2(e^1e^2) 
     \end{array}$ 
\\ \midrule
$\begin{array}{l}\mathfrak{e}(\mu), \, \mu\geq 0 \end{array}$ & \begin{tabular}{l}
$[e_1, e_2] = \mu e_2+e_3$ \\ $[e_1,e_3] = \mu e_3-e_2$ 
\end{tabular} & $\begin{array}{l}
     (e^1)^2+(e^2)^2+\alpha_1(e^3)^2, \,\, 0< \alpha_1 \leq 1  \mbox{ or } \alpha_1 <0 \\
       (e^2)^2-(e^1)^2+\alpha_2(e^3)^2, \,\, 0<\alpha_2 \leq 1  \\
       (e^3)^2+2(e^1e^2)
\end{array}$\\
\bottomrule 
\end{tabular}
\vspace*{-4mm}
\caption{Normal forms of metric Lie algebras}   
\end{table}
}

\section*{Acknowledgments}  
{\small

\begingroup
\sloppy

The first and second named authors  acknowledge the support of CMAT (Centro de Matem\'atica da Universidade do Minho). Their research was financed by Portuguese Funds through FCT (Fundação para a Ciência e a Tecnologia, I.P.) within the Projects UIDB/00013/2020 and UIDP/00013/2020 and also within the doctoral grant UI/BD/154255/2022 of the first named author.   Partial support from LABEX MILYON (ANR-10-LABX-0070) of Université de Lyon, within the framework of the “France 2030” program (ANR-11-IDEX-0007), managed by the French National Research Agency (ANR) is acknowledged by the first named author. 

\endgroup
}

\bibliographystyle{alpha}
\bibliography{bibliography}

\begin{thebibliography}{ABZ23}

\bibitem[ABZ23]{AlloutBelkacemZeghib}
S.~Allout, A.~Belkacem, and A.~Zeghib.
\newblock On homogeneous 3-dimensional spacetimes: focus on plane waves, 2023.
\newblock \url{ https://doi.org/10.48550/arXiv.2210.11439}.

\bibitem[BC22a]{BoucettaChakkar}
M.~Boucetta and A.~Chakkar.
\newblock The isometry groups of {L}orentzian three-dimensional unimodular
  simply connected {L}ie groups.
\newblock {\em Rev. Un. Mat. Argentina}, 63(2):353--378, 2022.
\newblock \url{https://doi.org/10.33044/revuma.2021}.

\bibitem[BC22b]{BoucettaChakkar-m}
M.~Boucetta and A.~Chakkar.
\newblock The moduli spaces of {L}orentzian left-invariant metrics on
  three-dimensional unimodular simply connected {L}ie groups.
\newblock {\em J. Korean Math. Soc.}, 59(4):651--684, 2022.
\newblock \url{https://doi.org/10.4134/JKMS.j210460}.

\bibitem[Bia98]{Bianchi}
L.~Bianchi.
\newblock Sugli spazi a tre dimensioni che ammettono un gruppo continuo di
  movimenti.
\newblock Mem. {Soc}. {Ital}. delle {Scienze} (3) 11, 267-352, 1898.

\bibitem[BL16]{BoucettaLebzioui}
M.~Boucetta and H.~Lebzioui.
\newblock Flat nonunimodular {L}orentzian {L}ie algebras.
\newblock {\em Comm. Algebra}, 44(10):4185--4195, 2016.
\newblock \url{https://doi.org/10.1080/00927872.2015.1087016}.

\bibitem[BM08]{BrombergMedina}
S.~Bromberg and A.~Medina.
\newblock Geodesically complete {L}orentzian metrics on some homogeneous 3
  manifolds.
\newblock {\em SIGMA Symmetry Integrability Geom. Methods Appl.}, 4:Paper 088,
  13, 2008.
\newblock \url{ https://doi.org/10.3842/SIGMA.2008.088}.

\bibitem[CF25]{CF-partII}
S.~Chaib and A.C Ferreira.
\newblock The completeness problem on 3-dimensional non-unimodular {L}ie
  groups.
\newblock \url{https://doi.org/10.48550/arXiv.2504.10998}, 2025.

\bibitem[CFZ24]{CFZ-partI}
S.~Chaib, A.C Ferreira, and A.~Zeghib.
\newblock The completeness problem on the pseudo-homothetic {L}ie group.
\newblock \url{ https://doi.org/10.48550/arXiv.2410.20612}, 2024.

\bibitem[CR22]{CosgayaReggiani}
A.~Cosgaya and S.~Reggiani.
\newblock Isometry groups of three-dimensional {L}ie groups.
\newblock {\em Ann. Global Anal. Geom.}, 61(4):831--845, 2022.
\newblock \url{https://doi.org/10.1007/s10455-022-09835-3}.

\bibitem[D'A88]{D'Ambra}
G.~D'Ambra.
\newblock Isometry groups of {L}orentz manifolds.
\newblock {\em Invent. Math.}, 92(3):555--565, 1988.
\newblock \url{https://doi.org/10.1007/BF01393747}.

\bibitem[FR06]{FelsRenner}
M.~E. Fels and A.~G. Renner.
\newblock Non-reductive homogeneous pseudo-{R}iemannian manifolds of dimension
  four.
\newblock {\em Canad. J. Math.}, 58(2):282--311, 2006.
\newblock \url{https://doi.org/10.4153/CJM-2006-012-1}.

\bibitem[Gue94]{Guediri-2step}
M.~Guediri.
\newblock Sur la compl\'etude des pseudo-m\'etriques invariantes a gauche sur
  les groupes de {L}ie nilpotents.
\newblock {\em Rend. Sem. Mat. Univ. Politec. Torino}, 52(4):371--376, 1994.
\newblock
  \url{http://www.seminariomatematico.polito.it/rendiconti/cartaceo/52-4/371.pdf}.

\bibitem[HL12]{HaLee12}
K.~Ha and J.~Lee.
\newblock The isometry groups of simply connected 3-dimensional unimodular
  {L}ie groups.
\newblock {\em J. Geom. Phys.}, 62(2):189--203, 2012.
\newblock \url{https://doi.org/10.1016/j.geomphys.2011.10.011}.

\bibitem[HL23]{HaLee23}
K.~Ha and J.~Lee.
\newblock Left invariant {L}orentzian metrics and curvatures on non-unimodular
  {L}ie groups of dimension three.
\newblock {\em J. Korean Math. Soc.}, 60(1):143--165, 2023.
\newblock \url{https://doi.org/10.4134/JKMS.j220238}.

\bibitem[Mil76]{Milnor}
J.~Milnor.
\newblock Curvatures of left invariant metrics on {L}ie groups.
\newblock {\em Advances in Math.}, 21(3):293--329, 1976.
\newblock \url{https://doi.org/10.1016/S0001-8708(76)80002-3}.

\bibitem[MS39]{MyersSteenrod}
S.~B. Myers and N.~E. Steenrod.
\newblock The group of isometries of a {R}iemannian manifold.
\newblock {\em Ann. of Math. (2)}, 40(2):400--416, 1939.
\newblock \url{https://doi.org/10.2307/1968928}.

\bibitem[O'N83]{ONeill}
B.~O'Neill.
\newblock {\em Semi-Riemannian Geometry With Applications to Relativity}.
\newblock Academic Press, 1983.

\bibitem[Oni94]{Onish-TTTG}
A.~L. Onishchik.
\newblock {\em Topology of transitive transformation groups}.
\newblock Johann Ambrosius Barth Verlag GmbH, Leipzig, 1994.

\bibitem[Shi97]{Shin}
J.~Shin.
\newblock Isometry groups of unimodular simply connected {$3$}-dimensional
  {L}ie groups.
\newblock {\em Geom. Dedicata}, 65(3):267--290, 1997.
\newblock \url{https://doi.org/10.1023/A:1004957320982}.

\bibitem[Sin60]{Singer}
I.~M. Singer.
\newblock Infinitesimally homogeneous spaces.
\newblock {\em Comm. Pure Appl. Math.}, 13:685--697, 1960.
\newblock \url{https://doi.org/10.1002/cpa.3160130408}.

\bibitem[VS20]{VukmirovicSukilovic}
S.~Vukmirovi\'c and T.~Sukilovi\'c.
\newblock Geodesic completeness of the left-invariant metrics on {$\mathbb{
  R}H^n$}.
\newblock {\em Ukra\"in. Mat. Zh.}, 72(5):611--619, 2020.
\newblock \url {https://doi.org/10.1007/s11253-020-01810-0}.

\bibitem[ZCL21]{ZhuChenLiang}
F.~Zhu, Z.~Chen, and K.~Liang.
\newblock On isometry groups of pseudo-{R}iemannian compact {L}ie groups.
\newblock {\em Proc. Amer. Math. Soc.}, 149(9):3991--3997, 2021.
\newblock \url{https://doi.org/10.1090/proc/15549}.

\end{thebibliography}

\end{document}